\theoremstyle{definition}
\newtheorem{defn}{Definition}[section]
\newtheorem{rem}[defn]{Remark}
\theoremstyle{plain}
\newtheorem{thm}[defn]{Theorem}
\newtheorem{prop}[defn]{Proposition}
\newtheorem{lem}[defn]{Lemma}
\newtheorem{cor}[defn]{Corollary}
\numberwithin{equation}{section}
\title[Generators for the handlebody subgroup of the Torelli group]{A small normal generating set for the handlebody subgroup of the Torelli group}
\author[G.~Omori]{Genki Omori}
\address{
(Genki Omori)
Department of Mathematics,
Tokyo Institute of Technology,
Oh-okayama, Meguro, Tokyo 152-8551, Japan
}
\email{omori.g.aa@m.titech.ac.jp}
\date{\today}
\begin{document}
\maketitle
\begin{abstract}
We prove that the handlebody subgroup of the Torelli group of an orientable surface is generated by genus one BP-maps
. As an application, we give a normal generating set for the handlebody subgroup of the level $d$ mapping class group of an orientable surface.
\end{abstract}

\section{Introduction}

Let $H_g$ be an oriented 3-dimensional handlebody of genus $g$ and let $D_0$ be a disk on the boundary $\Sigma _g=\partial H_g$ of $H_g$. We fix a model of $H_g$ and $D_0$ as in Figure~\ref{handlebody_scc1} and set $\Sigma _{g,1}:=\Sigma _g-{\rm int}D_0$. The {\it mapping class group} $\mathcal{M}_{g,1}$ of $\Sigma _{g,1}$ is the group of isotopy classes of orientation preserving self-diffeomorphisms on $\Sigma _g$ fixing $D_0$ pointwise and the {\it handlebody group} $\mathcal{H}_{g,1}$ is the subgroup of $\mathcal{M}_{g,1}$ which consists of elements that extend to $H_g$.

For a simple closed curve $c$ on $\Sigma _{g,1}$, denote by $t_c$ the right-handed Dehn twist along $c$. A pair $\{ c_1, c_2\}$ of simple closed curves $c_1$ and $c_2$ on $\Sigma _{g,1}$ is a {\it bounding pair (BP) on} $\Sigma _{g,1}$ if $c_1$ and $c_2$ are disjoint, non-isotopic and their integral homology classes are non-trivial and the same. A BP $\{ c_1,c_2\}$ on $\Sigma _{g,1}$ is a {\it genus-$h$ bounding pair (genus-$h$ BP) on} $\Sigma _{g,1}$ if the union of $c_1$ and $c_2$ bounds a subsurface of $\Sigma _{g,1}$ of genus $h$ with two boundary components. For a BP (resp. genus-$h$ BP) $\{ c_1,c_2\}$ on $\Sigma _{g,1}$, we call $t_{c_1}t_{c_2}^{-1}$ a {\it BP-map} (resp. {\it genus-$h$ BP-map}). 

The {\it Torelli group} $\mathcal{I}_{g,1}$ of $\Sigma _{g,1}$ is the the kernel of a homomorphism $\Psi :\mathcal{M}_{g,1}\rightarrow {\rm Sp}(2g,\mathbb Z)$ induced by the action of $\mathcal{M}_{g,1}$ on the integral first homology group ${\rm H}_1(\Sigma _{g,1};\mathbb Z)$ of $\Sigma _{g,1}$. Genus-$h$ BP-maps are elements of $\mathcal{I}_{g,1}$. For a group $G$, a normal subgroup $H$ of $G$ and elements $x_1, x_2, \dots ,x_n$ of $H$, $H$ is {\it normally generated in $G$ by $x_1, x_2, \dots ,x_n$} if $H$ is the normal closure of $\{ x_1, x_2, \dots ,x_n\}$ in $G$. By an argument of Powell~\cite{Powell}, $\mathcal{I}_{g,1}$ is normally generated in $\mathcal{M}_{g,1}$ by a genus-1 BP-map and Dehn twists along separating simple closed curves (actually, Powell proved that the Torelli group of an closed oriented surface is generated by genus-1 BP-maps and Dehn twists along separating simple closed curves by using Birman's finite presentation~\cite{Birman2} for the symplectic group ${\rm Sp}(2g,\mathbb Z)$). Johnson showed that $\mathcal{I}_{g,1}$ is normally generated in $\mathcal{M}_{g,1}$ by a genus-1 BP-map in~\cite{Johnson1} and gave an explicit finite generating set for $\mathcal{I}_{g,1}$ in~\cite{Johnson2}. A smaller finite generating set for $\mathcal{I}_{g,1}$ is given by Putman~\cite{Putman1}. 

Denote by $\mathcal{V}(3)$ the set of diffeomorphism classes of connected closed oriented 3-manifolds and by $\mathcal{S}(3)$ the set of diffeomorphism classes of integral homology 3-spheres. Let $H_g^\prime $ be a 3-dimensional handlebody of genus $g$ such that $\partial H_g^\prime =\Sigma _g$ and the union $H_g\cup H_g^\prime $ is diffeomorphic to the 3-sphere $S^3$, and let $\mathcal{H}_{g,1}^\prime $ be the subgroup of $\mathcal{M}_{g,1}$ which consists of elements that extend to $H_g^\prime $. For each $f\in \mathcal{M}_{g,1}$, we denote by $M_f$ the closed oriented 3-manifold obtained by gluing the disjoint union of $H_g$ and $H_g^\prime $ along $f$. We regard $\mathcal{M}_{g,1}$ as a subgroup of $\mathcal{M}_{g+1,1}$ by a natural injective stabilization map $\mathcal{M}_{g,1}\hookrightarrow \mathcal{M}_{g+1,1}$. Then we have a bijection
\[
\lim_{g \to \infty} \mathcal{H}_{g,1}\setminus \mathcal{M}_{g,1}/\mathcal{H}_{g,1}^\prime \longrightarrow \mathcal{V}(3) 
\]
by $[f]$ to $M_f$ (see for instance~\cite{Birman1}). The above bijection induces the following bijection~\cite{Morita}:
\[
\lim_{g \to \infty} \mathcal{H}_{g,1}\setminus \mathcal{I}_{g,1}/\mathcal{H}_{g,1}^\prime \longrightarrow \mathcal{S}(3). 
\]
Hence any integral homology 3-sphere is represented by an element of $\mathcal{I}_{g,1}$. Note that $\mathcal{H}_{g,1}$ and $\mathcal{H}_{g,1}^\prime $ are not subgroups of $\mathcal{I}_{g,1}$, and for $f$, $h\in \mathcal{I}_{g,1}$, $[f]=[h]\in \mathcal{H}_{g,1}\setminus \mathcal{I}_{g,1}/\mathcal{H}_{g,1}^\prime $ means there exist elements $\varphi \in \mathcal{H}_{g,1}$ and $\varphi ^\prime \in \mathcal{H}_{g,1}^\prime $ such that $h=\varphi f \varphi ^\prime \in \mathcal{I}_{g,1}$. We denote by $\mathcal{IH}_{g,1}$ (resp. $\mathcal{IH}_{g,1}^\prime $) the intersection of $\mathcal{I}_{g,1}$ and $\mathcal{H}_{g,1}$ (resp. $\mathcal{H}_{g,1}^\prime $). Pitsch~\cite{Pitsch} gave the following theorem.
\begin{thm}[\cite{Pitsch}]\label{Pitsch}
For $f$, $h\in \mathcal{I}_{g,1}$, $[f]=[h]\in \mathcal{H}_{g,1}\setminus \mathcal{I}_{g,1}/\mathcal{H}_{g,1}^\prime $ if and only if there exist elements $\varphi \in \mathcal{IH}_{g,1}$, $\varphi ^\prime \in \mathcal{IH}_{g,1}^\prime $ and $\psi \in \mathcal{H}_{g,1}\cap \mathcal{H}_{g,1}^\prime $ such that 
\[
h=\psi \varphi f \varphi ^\prime \psi ^{-1}.
\]
\end{thm}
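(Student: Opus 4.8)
The plan is to translate the double-coset relation into the symplectic group $\mathrm{Sp}(2g,\mathbb{Z})$ via $\Psi$, exploiting that $\mathcal{I}_{g,1}=\ker\Psi$. Write $V=\mathrm{H}_1(\Sigma_{g,1};\mathbb{Z})$, equipped with the (symplectic) intersection form, and let $L=\ker(V\to \mathrm{H}_1(H_g;\mathbb{Z}))$ and $L^\prime=\ker(V\to \mathrm{H}_1(H_g^\prime;\mathbb{Z}))$ be the Lagrangian subspaces spanned by the meridians of $H_g$ and of $H_g^\prime$. Since $H_g\cup H_g^\prime=S^3$, the intersection form restricts to a perfect pairing between $L$ and $L^\prime$, so $V=L\oplus L^\prime$. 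Let $P$ and $P^\prime$ be the stabilizers of $L$ and of $L^\prime$ in $\mathrm{Sp}(2g,\mathbb{Z})$. An element of $\mathcal{H}_{g,1}$ extends over $H_g$ and hence carries meridian disks to meridian disks, so $\Psi(\mathcal{H}_{g,1})\subseteq P$; similarly $\Psi(\mathcal{H}_{g,1}^\prime)\subseteq P^\prime$. With respect to $V=L\oplus L^\prime$ the intersection $P\cap P^\prime$ is the subgroup of block-diagonal symplectic matrices $\mathrm{diag}(A,(A^{-1})^{T})$, which is isomorphic to $\mathrm{GL}(g,\mathbb{Z})$.

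The \emph{if} direction is immediate: if $h=\psi\varphi f\varphi^\prime\psi^{-1}$ with $\psi\in\mathcal{H}_{g,1}\cap\mathcal{H}_{g,1}^\prime$, $\varphi\in\mathcal{IH}_{g,1}$ and $\varphi^\prime\in\mathcal{IH}_{g,1}^\prime$, then $\psi\varphi\in\mathcal{H}_{g,1}$ and $\varphi^\prime\psi^{-1}\in\mathcal{H}_{g,1}^\prime$, so $h=(\psi\varphi)f(\varphi^\prime\psi^{-1})$ exhibits $[f]=[h]$. For the \emph{only if} direction, suppose $[f]=[h]$, say $h=\varphi f\varphi^\prime$ with $\varphi\in\mathcal{H}_{g,1}$ and $\varphi^\prime\in\mathcal{H}_{g,1}^\prime$. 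Applying $\Psi$ and using $\Psi(f)=\Psi(h)=\id$ gives $\Psi(\varphi)\Psi(\varphi^\prime)=\id$, so $\Psi(\varphi)=\Psi(\varphi^\prime)^{-1}$. Then $\Psi(\varphi)\in P$ while $\Psi(\varphi)=\Psi(\varphi^\prime)^{-1}\in P^\prime$, so $\Psi(\varphi)$ lies in the Levi subgroup $P\cap P^\prime\cong\mathrm{GL}(g,\mathbb{Z})$.

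It remains to realize this element by a mapping class fixing $D_0$ that extends over both handlebodies. The key step, which I expect to be the main obstacle, is to show that $\Psi$ maps $\mathcal{H}_{g,1}\cap\mathcal{H}_{g,1}^\prime$ onto the Levi subgroup $P\cap P^\prime\cong\mathrm{GL}(g,\mathbb{Z})$. I would prove this by realizing a generating set of $\mathrm{GL}(g,\mathbb{Z})$ by diffeomorphisms of $S^3$ preserving the Heegaard surface $\Sigma_g$ (hence extending over both $H_g$ and $H_g^\prime$): permutations of the handles give the permutation matrices, a half-rotation of a single handle gives the diagonal sign changes, and dragging one $1$-handle once around the core of another (a handle slide performed ambiently in $S^3$) gives the elementary transvections. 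Verifying that these ambient handle slides genuinely preserve \emph{both} handlebody structures and induce exactly the claimed matrices is the delicate point.

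Granting the surjectivity, choose $\psi\in\mathcal{H}_{g,1}\cap\mathcal{H}_{g,1}^\prime$ with $\Psi(\psi)=\Psi(\varphi)$ and set $\widetilde\varphi:=\psi^{-1}\varphi$ and $\widetilde\varphi^\prime:=\varphi^\prime\psi$. Then $\widetilde\varphi\in\mathcal{H}_{g,1}$ and $\widetilde\varphi^\prime\in\mathcal{H}_{g,1}^\prime$, while $\Psi(\widetilde\varphi)=\Psi(\psi)^{-1}\Psi(\varphi)=\id$ and $\Psi(\widetilde\varphi^\prime)=\Psi(\varphi^\prime)\Psi(\psi)=\Psi(\varphi)^{-1}\Psi(\varphi)=\id$, so $\widetilde\varphi\in\mathcal{IH}_{g,1}$ and $\widetilde\varphi^\prime\in\mathcal{IH}_{g,1}^\prime$. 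Since $\psi\widetilde\varphi f\widetilde\varphi^\prime\psi^{-1}=\varphi f\varphi^\prime=h$, this gives the desired expression and completes the argument.
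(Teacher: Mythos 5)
The paper itself contains no proof of this theorem---it is quoted from \cite{Pitsch}---so the only meaningful comparison is with Pitsch's original argument, and your proposal is essentially that argument: apply $\Psi$, note that since $f,h\in\ker\Psi$ any correcting pair satisfies $\Psi(\varphi)=\Psi(\varphi')^{-1}$ and therefore lies in the common Levi subgroup $P\cap P'\cong\mathrm{GL}(g,\mathbb{Z})$ of the two Lagrangian stabilizers, lift that element to some $\psi\in\mathcal{H}_{g,1}\cap\mathcal{H}_{g,1}'$, and absorb it. Your ``if'' direction and the final bookkeeping with $\widetilde\varphi=\psi^{-1}\varphi$ and $\widetilde\varphi'=\varphi'\psi$ are correct as written. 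The one step you grant yourself---surjectivity of $\Psi$ restricted to $\mathcal{H}_{g,1}\cap\mathcal{H}_{g,1}'$ onto the Levi---is indeed the sole nontrivial content, and it is established exactly as you sketch: handle permutations, the half-rotation $\omega$, and handle slides of the standard Heegaard handlebody are all realized by diffeomorphisms of $S^3$ preserving $\Sigma_g$ (hence preserving both $H_g$ and $H_g'$) and fixable on $D_0$, and these induce a generating set of $\mathrm{GL}(g,\mathbb{Z})$; so nothing in your outline would fail, though a complete write-up would need to carry out that ambient-isotopy verification rather than defer it.
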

For these reasons, it is important for the classification of integral homology 3-spheres to give a simple generating set for $\mathcal{IH}_{g,1}$.

For a genus-$h$ BP $\{ c_1, c_2\}$ on $\Sigma _{g,1}$, $\{ c_1, c_2\}$ is a {\it genus-$h$ homotopical bounding pair (genus-$h$ HBP)} on $\Sigma _{g,1}$ if each $c_i$ $(i=1,2)$ doesn't bound a disk on $H_g$ and the disjoint union $c_1\sqcup c_2$ bounds an annulus on $H_g$. We remark that such an annulus is unique up to isotopy by the irreducibility of $H_g$. For example, a pair $\{ C_1,C_2\}$ of simple closed curves $C_1$ and $C_2$ on $\Sigma _{g,1}$ as in Figure~\ref{handlebody_scc1} is a genus-1 HBP on $\Sigma _{g,1}$. For a genus-$h$ HBP $\{ c_1, c_2\}$ on $\Sigma _{g,1}$, we call $t_{c_1}t_{c_2}^{-1}$ a {\it genus-$h$ HBP-map}. Hence $t_{C_1}t_{C_2}^{-1}$ is a genus-$h$ HBP-map. Remark that genus-$h$ HBP-maps are elements of $\mathcal{IH}_{g,1}$. The main theorem in this paper is as follows:
\begin{thm}\label{mainthm}
For $g\geq 3$, $\mathcal{IH}_{g,1}$ is normally generated in $\mathcal{H}_{g,1}$ by $t_{C_1}t_{C_2}^{-1}$. 
In particular, for $g\geq 3$, $\mathcal{IH}_{g,1}$ is generated by genus-1 HBP-maps.
\end{thm}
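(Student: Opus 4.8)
The plan is to analyze the short exact sequence
\[
1 \longrightarrow \mathcal{IH}_{g,1} \longrightarrow \mathcal{H}_{g,1} \xrightarrow{\ \Psi\ } \mathcal{G} \longrightarrow 1,
\]
where $\mathcal{G}=\Psi(\mathcal{H}_{g,1})$ is the image of the handlebody group in ${\rm Sp}(2g,\mathbb Z)$. This image is the stabilizer of the Lagrangian $L=\ker\bigl({\rm H}_1(\Sigma _{g,1};\mathbb Z)\to {\rm H}_1(H_g;\mathbb Z)\bigr)$, a maximal parabolic subgroup whose structure is governed by ${\rm GL}(g,\mathbb Z)$ together with a symmetric ``abelian'' block. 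First I would fix an explicit finite generating set $X$ of $\mathcal{H}_{g,1}$ (meridian twists $t_{x_i}$, handle slides, a handle transposition, and one rotation), chosen so that $\Psi(X)$ is a standard generating set of $\mathcal{G}$, and record a finite presentation $\mathcal{G}=\langle \Psi(X)\mid R\rangle$ assembled from a presentation of ${\rm GL}(g,\mathbb Z)$ for $g\geq 3$ and the relations of the abelian block. By the standard Reidemeister--Schreier principle, $\mathcal{IH}_{g,1}$ is then normally generated in $\mathcal{H}_{g,1}$ by the lifts $\{\tilde r:r\in R\}$ of these relators, each of which automatically lies in $\ker\Psi=\mathcal{IH}_{g,1}$.

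The heart of the argument is to place each relator lift $\tilde r$ in the normal closure $N:=\langle\langle\, t_{C_1}t_{C_2}^{-1}\,\rangle\rangle_{\mathcal{H}_{g,1}}$. To this end I would first prove two enabling lemmas. The first is a handlebody change-of-coordinates principle: any two genus-1 HBPs on $\Sigma _{g,1}$ are carried onto one another by an element of $\mathcal{H}_{g,1}$, since they have the same type relative to the handlebody (a non-disk-bounding curve together with its parallel copy across the unique annulus, cutting off a genus-1 piece). Consequently $N$ contains every genus-1 HBP-map and, via the telescoping identity $t_{c_1}t_{c_3}^{-1}=(t_{c_1}t_{c_2}^{-1})(t_{c_2}t_{c_3}^{-1})$ applied to an intermediate curve $c_2$ that is parallel to $c_1$ in $H_g$ and splits off a genus-1 subsurface of $\Sigma _{g,1}$, every genus-$h$ HBP-map as well. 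The second lemma handles the extendable separating twists $t_c\in\mathcal{IH}_{g,1}$, writing each as a product of HBP-maps by a lantern relation; here the hypothesis $g\geq 3$ guarantees enough room to realize the lantern by HBPs.

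With these lemmas available, each $\tilde r$ is an explicit word in Dehn twists along curves determined by $X$, and I would rewrite it relation by relation as a product of HBP-maps and extendable separating twists, thereby exhibiting $\tilde r\in N$. The ``in particular'' clause is then immediate: conjugation by $\varphi\in\mathcal{H}_{g,1}$ sends $t_{C_1}t_{C_2}^{-1}$ to $t_{\varphi(C_1)}t_{\varphi(C_2)}^{-1}$, which by the change-of-coordinates lemma ranges over all genus-1 HBP-maps, so $N$ coincides with the subgroup they generate. I expect the main obstacle to be precisely this final reduction --- the bookkeeping needed to simplify each relator lift into HBP-maps --- together with verifying throughout that the intermediate and lantern curves can always be chosen to bound annuli but no disks in $H_g$, which is exactly the homotopical condition defining an HBP.
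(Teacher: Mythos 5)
Your overall framework is sound as abstract group theory (the kernel of a surjection $\Psi:\mathcal{H}_{g,1}\twoheadrightarrow\mathcal{G}$ is normally generated by lifts of relators of any presentation of $\mathcal{G}$ on $\Psi(X)$), but the proposal breaks down at your first enabling lemma, and that failure is not repairable by routine means. It is false that any two genus-1 HBPs on $\Sigma_{g,1}$ are carried onto one another by an element of $\mathcal{H}_{g,1}$: unlike the change-of-coordinates principle in the mapping class group, the $\mathcal{H}_{g,1}$-orbit of an HBP is not determined by the topological type of the pair on the surface, because it also remembers how the annulus bounded by the pair sits inside $H_g$ relative to meridian disk systems. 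The paper proves exactly this failure in Section~\ref{section_conjugation}: by Proposition~\ref{conj_hbpmap}, $t_{c_1}t_{c_2}^{-1}$ is conjugate to $t_{C_1}t_{C_2}^{-1}$ in $\mathcal{H}_{g,1}$ only if the annulus meets some meridian disk system in a single non-separating arc, and by Proposition~\ref{hbp_notconj} the genus-1 HBPs $\{C_1^m,C_2^m\}$ with $m\geq 2$, whose annulus $A_m$ passes $m$ times through the first handle, admit no such disk system. Thus $t_{C_1^m}t_{C_2^m}^{-1}$ is a genus-1 HBP-map that is \emph{not} conjugate to $t_{C_1}t_{C_2}^{-1}$; that it nevertheless lies in the normal closure $N$ is a consequence of Theorem~\ref{mainthm}, not an available ingredient for proving it. Everything you build on this lemma --- that $N$ contains all genus-1 HBP-maps, hence all genus-$h$ HBP-maps by telescoping, hence (via lanterns) the extendable separating twists needed for your relator analysis --- therefore has no foundation. (The ``in particular'' clause, by contrast, does not need the lemma: conjugates of $t_{C_1}t_{C_2}^{-1}$ by elements of $\mathcal{H}_{g,1}$ are \emph{some} genus-1 HBP-maps, which already gives $N\subseteq\langle\text{genus-1 HBP-maps}\rangle\subseteq\mathcal{IH}_{g,1}$.)

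A second, independent problem is that the heart of your plan is deferred rather than executed: you would need an explicit finite presentation of $\mathcal{G}={\rm urSp}(2g)\cong {\rm GL}(g,\mathbb Z)\ltimes \mathcal{S}_g$ (already a substantial input for $g\geq 3$), and then you must rewrite every relator lift as a product of conjugates of $t_{C_1}t_{C_2}^{-1}$ --- that rewriting is precisely where all the difficulty lives, and the proposal says nothing about how to carry it out. The paper takes a different and much shorter route that avoids presentations entirely: instead of $\Psi$ it uses the action $\eta:\mathcal{H}_{g,1}\rightarrow{\rm Aut}F_g$ on $\pi_1(H_g)$, so that $\mathcal{IH}_{g,1}$ surjects onto ${\rm IA}_g$ with kernel the Luft--Torelli group $\mathcal{IL}_{g,1}$. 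Magnus's Theorem~\ref{IA} (normal generation of ${\rm IA}_g$ by the single automorphism $C_{v_1,v_2}=\eta(t_{C_1}t_{C_2}^{-1})$) together with Pitsch's Theorem~\ref{Luft-Torelli} (generation of $\mathcal{IL}_{g,1}$ by CBP-maps, improved by Johnson's telescoping argument in Proposition~\ref{improvemnt1} to normal generation by one genus-1 CBP-map) does all the heavy lifting; the only new computation is the identity $t_{D_2}t_{D_2^\prime}^{-1}=\bigl(t_{D_2}(t_{C_1}t_{C_2}^{-1})^{-1}t_{D_2}^{-1}\bigr)\cdot t_{C_1}t_{C_2}^{-1}$, valid because $(t_{C_1}t_{C_2}^{-1})^{-1}(D_2)=D_2^\prime$, which absorbs the CBP generator into $N$. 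In your scheme, the role played there by Magnus's theorem is exactly the role your unexecuted relator analysis would have to play, and there is no evident shortcut for it.
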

We prove Theorem~\ref{mainthm} in Section~\ref{proof}. In Section~\ref{section_conjugation}, we give a necessary and sufficient condition that a genus-1 HBP-map is conjugate to $t_{C_1}t_{C_2}^{-1}$ in $\mathcal{H}_{g,1}$.

For $d\geq 2$, we define $\mathbb Z_d:=\mathbb Z/d\mathbb Z$. The {\it level} $d$ {\it mapping class group} $\mathcal{M}_{g,1}[d]$ is the kernel of a homomorphism $\Psi_d :\mathcal{M}_{g,1}\rightarrow {\rm Sp}(2g,\mathbb Z_d)$ induced by the action of $\mathcal{M}_{g,1}$ on ${\rm H}_1(\Sigma _{g,1};\mathbb Z_d)$. Denote by $\mathcal{H}_{g,1}[d]$ the intersection of $\mathcal{M}_{g,1}[d]$ and $\mathcal{H}_{g,1}$. Let $D_1, D_2, \dots , D_g$ and $C_2^\prime $ be simple closed curves on $\Sigma _{g,1}$ as in Figure~\ref{handlebody_scc1}. Each of $D_1, D_2, \dots , D_g$ bounds a disk in $H_g$. We define $\alpha :=t_{C_1}t_{C_2^\prime }^{-1}$ and denote by $\omega $ the diffeomorphism on $\Sigma _{g,1}$ which is described as the result of the half rotation of the first handle of $H_g$ as in Figure~\ref{handle_half_rotation}. Note that $\alpha ^d$, $t_{D_i}^d$ $(i=1,\dots ,g)$, and a genus-$h$ HBP-maps are elements of $\mathcal{H}_{g,1}[d]$ and $\omega $ is an element of $\mathcal{H}_{g,1}[2]$. As an application of Theorem~\ref{mainthm}, we obtain the following theorem. The proof is given in Section~\ref{proof_application}.
\begin{thm}\label{thm_application}
For $g\geq 3$, $\mathcal{H}_{g,1}[2]$ is normally generated in $\mathcal{H}_{g,1}$ by $\omega $, $t_{D_1}^2$ and $t_{C_1}t_{C_2}^{-1}$. 

For $g\geq 3$ and $d\geq 3$, $\mathcal{H}_{g,1}[d]$ is normally generated in $\mathcal{H}_{g,1}$ by $\alpha ^d$, $t_{D_1}^d$ and $t_{C_1}t_{C_2}^{-1}$. 
\end{thm}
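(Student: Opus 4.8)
The plan is to combine Theorem~\ref{mainthm} with a computation inside the symplectic group. Since $\mathcal{I}_{g,1}\subseteq \mathcal{M}_{g,1}[d]$, we have $\mathcal{IH}_{g,1}\subseteq \mathcal{H}_{g,1}[d]$, and the restriction of $\Psi$ to $\mathcal{H}_{g,1}$ induces an isomorphism $\mathcal{H}_{g,1}/\mathcal{IH}_{g,1}\xrightarrow{\sim}P$, where $P:=\Psi(\mathcal{H}_{g,1})$ is the stabilizer in ${\rm Sp}(2g,\mathbb Z)$ of the Lagrangian $L:=\langle [D_1],\dots ,[D_g]\rangle$, i.e.\ the kernel of ${\rm H}_1(\Sigma _{g,1};\mathbb Z)\to {\rm H}_1(H_g;\mathbb Z)$. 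Under this isomorphism $\mathcal{H}_{g,1}[d]/\mathcal{IH}_{g,1}$ corresponds to $P[d]:=P\cap {\rm Sp}(2g,\mathbb Z)[d]$, where ${\rm Sp}(2g,\mathbb Z)[d]=\ker({\rm Sp}(2g,\mathbb Z)\to {\rm Sp}(2g,\mathbb Z_d))$. I would first record the elementary principle that if $N\trianglelefteq G$ is the normal closure of a set $S$, and $N\le K\trianglelefteq G$ with $K/N$ the normal closure in $G/N$ of the images of elements $t_1,\dots ,t_n\in K$, then $K$ is the normal closure in $G$ of $S\cup \{ t_1,\dots ,t_n\}$. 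Applying this with $G=\mathcal{H}_{g,1}$, $N=\mathcal{IH}_{g,1}$ (normally generated by $t_{C_1}t_{C_2}^{-1}$, by Theorem~\ref{mainthm}) and $K=\mathcal{H}_{g,1}[d]$ reduces the theorem to showing that $P[d]$ is normally generated in $P$ by $\Psi(\alpha ^d)$ and $\Psi(t_{D_1}^d)$ (for $d\ge 3$), respectively by $\Psi(\omega )$ and $\Psi(t_{D_1}^2)$ (for $d=2$).

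The next step is to make $P$ explicit. Choosing the symplectic basis so that $L=\langle a_1,\dots ,a_g\rangle$, an element of ${\rm Sp}(2g,\mathbb Z)$ fixes $L$ precisely when it has block form $\begin{pmatrix} A & AS\\ 0 & (A^T)^{-1}\end{pmatrix}$ with $A\in {\rm GL}(g,\mathbb Z)$ and $S$ a symmetric integer matrix; this gives a split extension $1\to {\rm Sym}(g,\mathbb Z)\to P\to {\rm GL}(g,\mathbb Z)\to 1$, which restricts to $1\to d\,{\rm Sym}(g,\mathbb Z)\to P[d]\to {\rm GL}(g,\mathbb Z)[d]\to 1$, where ${\rm GL}(g,\mathbb Z)[d]=\ker({\rm GL}(g,\mathbb Z)\to {\rm GL}(g,\mathbb Z_d))$. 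I would then read off the images of the generators: $t_{D_1}$ is the transvection along $a_1$, so $\Psi(t_{D_1}^d)$ lands in the abelian part as $-dE_{11}\in d\,{\rm Sym}(g,\mathbb Z)$; the handle slide $\alpha $ projects in the ${\rm GL}(g,\mathbb Z)$-factor to an elementary transvection $I+E_{12}$, so $\Psi(\alpha ^d)$ projects to $I+dE_{12}$; and $\omega $ projects to the reflection ${\rm diag}(-1,1,\dots ,1)$.

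The third step treats the two factors separately. For the abelian part, conjugating $\Psi(t_{D_1}^d)=-dE_{11}$ by the Levi copies of ${\rm GL}(g,\mathbb Z)$ inside $P$ produces $-d\,vv^T$ for every primitive column vector $v$, and since the matrices $vv^T$ generate ${\rm Sym}(g,\mathbb Z)$, the normal closure of $t_{D_1}^d$ already contains all of $d\,{\rm Sym}(g,\mathbb Z)$. For the Levi part with $d\ge 3$, the congruence condition $A\equiv I\pmod d$ forces $\det A=1$, so ${\rm GL}(g,\mathbb Z)[d]={\rm SL}(g,\mathbb Z)[d]$, and the key input is that for $g\ge 3$ the congruence subgroup ${\rm SL}(g,\mathbb Z)[d]$ is the normal closure in ${\rm SL}(g,\mathbb Z)$ of the single elementary matrix $I+dE_{12}$; this follows from the work of Mennicke and Bass--Milnor--Serre, the various $e_{ij}(d)=I+dE_{ij}$ being ${\rm SL}(g,\mathbb Z)$-conjugate once $g\ge 3$. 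For $d=2$, where $\det A=-1$ is allowed, I would instead put $R={\rm diag}(-1,1,\dots ,1)=\Psi(\omega )$ and $U=I+E_{12}$ and use the identity $(URU^{-1})R=I+2E_{12}$, which shows that the normal closure of $\Psi(\omega )$ contains $I+2E_{12}$, hence all of ${\rm SL}(g,\mathbb Z)[2]$, together with the $\det =-1$ element $R$, and therefore equals ${\rm GL}(g,\mathbb Z)[2]$. Writing a general element $\begin{pmatrix} A & AS\\ 0 & (A^T)^{-1}\end{pmatrix}$ of $P[d]$ as the product of its Levi lift $\begin{pmatrix} A & 0\\ 0 & (A^T)^{-1}\end{pmatrix}$ and the transvection $\begin{pmatrix} I & S\\ 0 & I\end{pmatrix}$ with $S\in d\,{\rm Sym}(g,\mathbb Z)$ then assembles these into the statement that $P[d]$ is normally generated in $P$ as required.

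The main obstacle, and the true source of the hypothesis $g\ge 3$ (beyond its role in Theorem~\ref{mainthm}), is this last normal-generation statement for ${\rm SL}(g,\mathbb Z)[d]$: it fails for $g=2$ and is exactly where the solution of the congruence subgroup problem is needed. The remaining work is bookkeeping: confirming the precise images of $\alpha $ and $\omega $ from Figure~\ref{handlebody_scc1}, checking that $\Psi(\mathcal{H}_{g,1})$ is the \emph{full} stabilizer $P$ (so the ${\rm SL}(g,\mathbb Z)$ statement applies verbatim), and verifying that $d\,{\rm Sym}(g,\mathbb Z)$, once absorbed by $t_{D_1}^d$, permits lifting the Levi computation from ${\rm GL}(g,\mathbb Z)$ back into $P$.
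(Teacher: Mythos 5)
Your proposal is correct and is essentially the paper's own proof: reduce modulo $\mathcal{IH}_{g,1}$ via Theorem~\ref{mainthm} (the paper packages this reduction as Lemma~\ref{gen_ker_composition}), identify $\Psi (\mathcal{H}_{g,1})$ with the Lagrangian stabilizer ${\rm urSp}(2g)$ (Birman's Lemma~\ref{birman}; your $P$ is exactly this group), factor an element of ${\rm urSp}(2g)[d]$ as a Levi lift times an element of $\mathcal{S}_g[d]$, absorb the symmetric block by conjugates of $Y_{1,1}^d$, and kill the ${\rm GL}$ block by Bass--Milnor--Serre for $d\geq 3$. Your polarization argument (the matrices $vv^T$ for $v$ primitive generate ${\rm Sym}(g,\mathbb Z)$) is the paper's Lemma~\ref{normalgen_sg} in different words, and your $d=2$ identity $(URU^{-1})R=I+2E_{12}$ is literally the paper's identity $E_{1,2}F_1E_{1,2}^{-1}\cdot F_1=E_{1,2}^2$ from Lemma~\ref{normalgen_level2_gl}. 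The one genuine divergence is how the level-$2$ Levi step is finished: the paper cites McCarthy--Pinkall's generating set $\{ E_{i,j}^2, F_i\}$ for $\Gamma _2(g)$, which makes the $d=2$ part of its Proposition~\ref{ursp} valid for all $g\geq 1$, whereas you invoke the congruence subgroup property at level $2$ (the normal closure of $I+2E_{12}$ in ${\rm SL}(g,\mathbb Z)$ equals ${\rm SL}(g,\mathbb Z)[2]$) together with the coset decomposition $\Gamma _2(g)=\langle {\rm SL}(g,\mathbb Z)[2], F_1\rangle $. That route is also correct for $g\geq 3$, which is all the theorem needs, but be aware that it requires Mennicke/Bass--Milnor--Serre at level $2$: the paper's Theorem~\ref{Bass-Milnor-Serre} is stated only for $d\geq 3$, so you must cite the full-strength result (which does hold for every level $d\geq 1$ once $g\geq 3$, since $SK_1(\mathbb Z,q)$ vanishes for all $q$) rather than the version quoted in the paper.
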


Let $\mathcal{I}(H_g\text{ rel }D_0)$ (resp. $\Gamma _d(H_g\text{ rel }D_0)$) be the the kernel of the natural homomorphism $\mathcal{H}_{g,1}\rightarrow {\rm Aut\ H}_1(H_g;\mathbb Z)$ (resp. $\mathcal{H}_{g,1}\rightarrow {\rm Aut\ H}_1(H_g;\mathbb Z_d)$). As a corollary of Theorem~\ref{mainthm} and Theorem~\ref{thm_application}, we have the following result.
\begin{cor}\label{thm_cor}
For $g\geq 3$, $\mathcal{I}(H_g\text{ rel }D_0)$ is normally generated in $\mathcal{H}_{g,1}$ by $t_{D_1}$ and $t_{C_1}t_{C_2}^{-1}$. 

For $g\geq 3$, $\Gamma _2(H_g\text{ rel }D_0)$ is normally generated in $\mathcal{H}_{g,1}$ by $\omega $, $t_{D_1}$ and $t_{C_1}t_{C_2}^{-1}$. 

For $g\geq 3$ and $d\geq 3$, $\Gamma _d(H_g\text{ rel }D_0)$ is normally generated in $\mathcal{H}_{g,1}$ by $\alpha ^d$, $t_{D_1}$ and $t_{C_1}t_{C_2}^{-1}$. 
\end{cor}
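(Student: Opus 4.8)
The plan is to realize each of the three groups as an extension of the corresponding ``surface-level'' group --- $\mathcal{IH}_{g,1}$ or $\mathcal{H}_{g,1}[d]$ --- by an abelian group of symmetric matrices, and then apply the standard normal-closure lemma for extensions together with Theorem~\ref{mainthm} and Theorem~\ref{thm_application}.

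First I would fix a symplectic basis of $\mathrm{H}_1(\Sigma _g;\mathbb Z)$ adapted to $H_g$: let $L:=\ker(\mathrm{H}_1(\Sigma _g;\mathbb Z)\to \mathrm{H}_1(H_g;\mathbb Z))$ be the Lagrangian spanned by the meridian classes $[D_1],\dots ,[D_g]$, and complete it to a symplectic basis by cores of the handles. Every $f\in \mathcal{H}_{g,1}$ preserves $L$, so its action on $\mathrm{H}_1(\Sigma _g;\mathbb Z)$ has block form $\left(\begin{smallmatrix} A & B \\ 0 & C\end{smallmatrix}\right)$, where $A\in GL(g,\mathbb Z)$ is the induced automorphism of $\mathrm{H}_1(H_g;\mathbb Z)$ and, by the symplectic condition, $C=(A^{T})^{-1}$ and $A^{T}B$ is symmetric. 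On $\mathcal{I}(H_g\text{ rel }D_0)$, where $A=I$, the assignment $f\mapsto B$ is therefore a homomorphism into the additive group $\mathrm{Sym}_g(\mathbb Z)$ of symmetric integer matrices, with kernel exactly $\mathcal{IH}_{g,1}$; reducing mod $d$ gives a homomorphism $\Gamma _d(H_g\text{ rel }D_0)\to \mathrm{Sym}_g(\mathbb Z_d)$ with kernel $\mathcal{H}_{g,1}[d]$. Thus I obtain short exact sequences
\[
1\to \mathcal{IH}_{g,1}\to \mathcal{I}(H_g\text{ rel }D_0)\to Q\to 1,\qquad 1\to \mathcal{H}_{g,1}[d]\to \Gamma _d(H_g\text{ rel }D_0)\to Q_d\to 1,
\]
where $Q\leq \mathrm{Sym}_g(\mathbb Z)$ and $Q_d\leq \mathrm{Sym}_g(\mathbb Z_d)$ are the images.

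Next I would identify $Q$ and $Q_d$ and show they are hit by conjugates of $t_{D_1}$. For $\phi \in \mathcal{H}_{g,1}$ the conjugate $\phi t_{D_1}\phi ^{-1}=t_{\phi (D_1)}$ lies in $\mathcal{I}(H_g\text{ rel }D_0)$, and its image in $\mathrm{Sym}_g(\mathbb Z)$ is the rank-one matrix $vv^{T}$ with $v:=\phi _*[D_1]\in L$. Since $\mathcal{H}_{g,1}$ surjects onto $\mathrm{Aut}\,\mathrm{H}_1(H_g;\mathbb Z)=GL(g,\mathbb Z)$, hence onto $GL(L)$, the vector $v$ ranges over all primitive vectors of $L\cong \mathbb Z^{g}$; and the matrices $vv^{T}$ for $v$ primitive generate all of $\mathrm{Sym}_g(\mathbb Z)$ (for instance $E_{ii}=e_ie_i^{T}$ and $E_{ij}+E_{ji}=(e_i+e_j)(e_i+e_j)^{T}-E_{ii}-E_{jj}$). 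Hence $Q=\mathrm{Sym}_g(\mathbb Z)$ and $Q_d=\mathrm{Sym}_g(\mathbb Z_d)$, and both are generated by the images of the $\mathcal{H}_{g,1}$-conjugates of $t_{D_1}$.

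Finally I would assemble the three statements. Let $N$ be the normal closure in $\mathcal{H}_{g,1}$ of $\{t_{D_1},t_{C_1}t_{C_2}^{-1}\}$. Each generator lies in the normal subgroup $\mathcal{I}(H_g\text{ rel }D_0)$, so $N\subseteq \mathcal{I}(H_g\text{ rel }D_0)$; by Theorem~\ref{mainthm} the normal closure of $t_{C_1}t_{C_2}^{-1}$ is $\mathcal{IH}_{g,1}$, so $\mathcal{IH}_{g,1}\subseteq N$, while the conjugates of $t_{D_1}$ already surject $N$ onto $Q$. The standard lemma --- if $N\trianglelefteq G$ contains the kernel of $G\twoheadrightarrow Q$ and surjects onto $Q$, then $N=G$ --- yields $N=\mathcal{I}(H_g\text{ rel }D_0)$, the first statement. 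For $\Gamma _d(H_g\text{ rel }D_0)$ I argue identically with $N$ the normal closure of $\{\omega ,t_{D_1},t_{C_1}t_{C_2}^{-1}\}$ (for $d=2$) or $\{\alpha ^d,t_{D_1},t_{C_1}t_{C_2}^{-1}\}$ (for $d\geq 3$): Theorem~\ref{thm_application} gives $\mathcal{H}_{g,1}[d]\subseteq N$ --- note that $t_{D_1}^2$ and $t_{D_1}^d$ are powers of $t_{D_1}\in N$ --- while the conjugates of $t_{D_1}$ surject $N$ onto $Q_d$, and the same lemma finishes the proof. The only content beyond Theorems~\ref{mainthm} and~\ref{thm_application} is the identification of $Q$ and $Q_d$ with the full symmetric-matrix groups, so I expect the block-triangular bookkeeping --- getting the symplectic relations, the mod-$d$ reductions, and the surjectivity onto $GL(g,\mathbb Z)$ right --- to be the main point requiring care.
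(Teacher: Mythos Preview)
Your proof is correct and follows essentially the same route as the paper: both set up the short exact sequences $1\to \mathcal{IH}_{g,1}\to \mathcal{I}(H_g\text{ rel }D_0)\to \mathcal{S}_g\to 1$ and $1\to \mathcal{H}_{g,1}[d]\to \Gamma _d(H_g\text{ rel }D_0)\to \mathcal{S}_g(d)\to 1$, show that $\mathcal{H}_{g,1}$-conjugates of $t_{D_1}$ surject onto the symmetric-matrix quotient, and then invoke Theorems~\ref{mainthm} and~\ref{thm_application}. The only cosmetic difference is that the paper packages the surjectivity step as the statement that $\mathcal{S}_g$ is normally generated in ${\rm urSp}(2g)$ by $Y_{1,1}$ (Lemma~\ref{normalgen_sg}), whereas you argue it directly via the rank-one matrices $vv^{T}$; these are the same computation.
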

We prove Corollary~\ref{thm_cor} in Section~\ref{proof_cor}. Luft~\cite{Luft} proved that $\mathcal{I}(H_g\text{ rel }D_0)$ is normally generated in $\mathcal{H}_{g,1}$ by disk twists and a map whose action on the fundamental group of $H_g$ is the same as the action of $t_{C_1}t_{C_2}^{-1}$. An action of $\alpha ^2$ on ${\rm H}_1(\Sigma _{g,1};\mathbb Z)$ is non-trivial, however, an action of a BP-map on ${\rm H}_1(\Sigma _{g,1};\mathbb Z)$ is trivial. As a corollary of Corollary~\ref{thm_cor}, we also have the following corollary. The proof is given in Section~\ref{proof_cor2}.

\begin{cor}\label{thm_cor2}
For $g\geq 3$, $\Gamma _2(H_g\text{ rel }D_0)$ is normally generated in $\mathcal{H}_{g,1}$ by $\omega $, $t_{D_1}$ and $\alpha ^2$.
\end{cor}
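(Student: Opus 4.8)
The plan is to show that the subgroup $N$ of $\mathcal{H}_{g,1}$ defined as the normal closure of $\{\omega ,t_{D_1},\alpha ^2\}$ coincides with $\Gamma _2(H_g\text{ rel }D_0)$. The inclusion $N\subseteq \Gamma _2(H_g\text{ rel }D_0)$ is the routine half: each of the three normal generators acts trivially on ${\rm H}_1(H_g;\mathbb Z_2)$. Indeed $t_{D_1}$ is a twist along a curve bounding a disk in $H_g$, so it even lies in $\mathcal{I}(H_g\text{ rel }D_0)$; and both $\omega $ and $\alpha ^2$ lie in $\mathcal{H}_{g,1}[2]$, which maps trivially to $\Aut {\rm H}_1(H_g;\mathbb Z_2)$ because ${\rm H}_1(H_g;\mathbb Z_2)$ is a quotient of ${\rm H}_1(\Sigma _g;\mathbb Z_2)$ and the action on the latter factors through the action on the former. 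As $\Gamma _2(H_g\text{ rel }D_0)$ is a subgroup, it then contains $N$.

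For the reverse inclusion I would invoke Corollary~\ref{thm_cor}, by which $\Gamma _2(H_g\text{ rel }D_0)$ is the normal closure of $\{\omega ,t_{D_1},t_{C_1}t_{C_2}^{-1}\}$. Since $\omega $ and $t_{D_1}$ already lie in $N$, the whole problem reduces to the single assertion
\[
t_{C_1}t_{C_2}^{-1}\in N .
\]
This is the heart of the matter, and exactly the point where the non-triviality of the integral homology action of $\alpha $ must be confronted: although $\alpha ^2$ and $t_{C_1}t_{C_2}^{-1}$ both act trivially on ${\rm H}_1(\Sigma _g;\mathbb Z_2)$, the BP-map $t_{C_1}t_{C_2}^{-1}$ lies in $\mathcal{I}_{g,1}$ while $\alpha ^2$ does not, so $t_{C_1}t_{C_2}^{-1}$ cannot be a mere conjugate of $\alpha ^2$.

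To produce $t_{C_1}t_{C_2}^{-1}$ I would exploit that the half-rotation $\omega $ of the first handle reverses the orientation of that handle and thereby inverts the part of the homology action of $\alpha $ supported there. Concretely, I would compute the action of $\omega $ on the curve $C_2'$ and check that $\omega \,\alpha ^2\,\omega ^{-1}$ agrees with $\alpha ^{-2}$ on ${\rm H}_1(\Sigma _g;\mathbb Z)$ up to the action of a twist $t_D$ along a meridian $D$ bounding a disk in $H_g$ (so $t_D$ is conjugate in $\mathcal{H}_{g,1}$ to $t_{D_1}$). Multiplying $\alpha ^2$ by $\omega \,\alpha ^2\,\omega ^{-1}$ and by a suitable power of $t_D$ then yields an element
\[
\gamma :=\alpha ^2\cdot (\omega \,\alpha ^2\,\omega ^{-1})\cdot t_D^{\,k}
\]
whose integral homology action is trivial, so $\gamma \in \mathcal{IH}_{g,1}$, and which manifestly lies in $N$ since $\alpha ^2$, $\omega $ and all disk twists do. I would then show, using the explicit model of the curves $C_1,C_2,C_2'$ and of $\omega $, that $\gamma $ is in fact a genus-1 HBP-map, and finally apply the conjugacy criterion of Section~\ref{section_conjugation} to conclude that $\gamma $ is conjugate in $\mathcal{H}_{g,1}$ to $t_{C_1}t_{C_2}^{-1}$. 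Because $\gamma $ lies in $N$ and $N$ is normal, this forces $t_{C_1}t_{C_2}^{-1}\in N$ and finishes the argument.

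The main obstacle is precisely this last geometric identification. Carrying out the homology bookkeeping that pins down the disk-twist correction $t_D^{\,k}$ is routine, but recognizing the resulting Torelli element $\gamma $ as a single genus-1 HBP-map, rather than as some more complicated product in $\mathcal{IH}_{g,1}$ for which Theorem~\ref{mainthm} would only give the wrong containment, is the delicate step; it is exactly what the conjugacy results of Section~\ref{section_conjugation}, applied to the concrete configuration of $C_1,C_2,C_2'$ and $\omega $, are designed to supply.
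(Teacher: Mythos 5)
Your opening moves are fine and coincide with the paper's: the inclusion of the normal closure $N$ of $\{\omega ,t_{D_1},\alpha ^2\}$ into $\Gamma _2(H_g\text{ rel }D_0)$ is routine, and Corollary~\ref{thm_cor} reduces everything to the single claim $t_{C_1}t_{C_2}^{-1}\in N$. Your homology bookkeeping is also correct: since $\Psi (\alpha )=X_{1,2}$ and $\Psi (\omega )=Z_1$, one checks $\Psi \bigl(\alpha ^2\cdot \omega \alpha ^2\omega ^{-1}\bigr)=X_{1,2}^2Z_1X_{1,2}^2Z_1^{-1}=I_{2g}$, so your $\gamma $ lies in $\mathcal{IH}_{g,1}$ (indeed with $k=0$). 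The fatal gap is the last step: $\gamma $ is \emph{never} conjugate to $t_{C_1}t_{C_2}^{-1}$ in $\mathcal{H}_{g,1}$, no matter how the geometric identification is attempted. To see this, apply $\eta :\mathcal{H}_{g,1}\rightarrow {\rm Aut}F_g$. Disk twists lie in ${\rm ker}\,\eta $ by Luft's theorem, $\eta (\alpha )$ sends $v_1\mapsto v_1v_2^{\pm 1}$ fixing the other generators, and $\eta (\omega )$ sends $v_1\mapsto v_1^{-1}$; a direct computation then gives $\eta (\gamma )=C_{v_1,v_2}^{\pm 2}$ (conjugation of $v_1$ by $v_2^{\mp 2}$), independently of the correction $t_D^k$. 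If $\gamma =h\,(t_{C_1}t_{C_2}^{-1})\,h^{-1}$ for some $h\in \mathcal{H}_{g,1}$, then $C_{v_1,v_2}^{\pm 2}$ would be conjugate to $C_{v_1,v_2}$ in ${\rm Aut}F_g$. This is impossible: the first Johnson homomorphism ${\rm IA}_g\rightarrow \Hom (\mathbb Z^g,\Lambda ^2\mathbb Z^g)$ is equivariant for the natural ${\rm GL}(g,\mathbb Z)$-action, it sends $C_{v_1,v_2}$ to the primitive lattice element $v_1^*\otimes (v_2\wedge v_1)$ and $C_{v_1,v_2}^{\pm 2}$ to $\pm 2\,v_1^*\otimes (v_2\wedge v_1)$, and divisibility is a conjugation invariant. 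Geometrically, conjugating $\alpha ^2$ by $\omega $ produces (at best) an HBP-map whose annulus core runs through the second handle \emph{twice}, i.e. the analogue of the paper's $t_{C_1^m}t_{C_2^m}^{-1}$ with $m=2$; and the whole point of Propositions~\ref{conj_hbpmap} and~\ref{hbp_notconj} is that such maps are \emph{not} conjugate to $t_{C_1}t_{C_2}^{-1}$. So the machinery of Section~\ref{section_conjugation} that you hoped would supply the conjugacy in fact rules it out.

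The missing idea is how to use $\alpha $ to an \emph{odd} power while only ever quoting $\alpha ^2$ as a normal generator, and the paper's solution is purely algebraic rather than a conjugacy argument. It conjugates $\alpha $ itself (not $\alpha ^2$) by $f:=t_{D_2}t_{D_2^{\prime \prime }}^{-1}\omega ^{-1}$, chosen so that $f(C_1)=C_2^\prime $ and $f(C_2^\prime )=C_2$; then the product telescopes on the nose, $\alpha \cdot f\alpha f^{-1}=t_{C_1}t_{C_2^\prime }^{-1}\cdot t_{C_2^\prime }t_{C_2}^{-1}=t_{C_1}t_{C_2}^{-1}$, with no conjugacy criterion needed. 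The fact that $\alpha \notin N$ is then harmless because one can regroup: $t_{C_1}t_{C_2}^{-1}=\alpha ^2\cdot (\alpha ^{-1}f\alpha )\cdot f^{-1}$, and $(\alpha ^{-1}f\alpha )f^{-1}$ lies in $N$ since $f$ is a product of $\omega ^{-1}$ and twists along nonseparating meridians ($t_{D_2}$, $t_{D_2^{\prime \prime }}$ are conjugate to $t_{D_1}$ in $\mathcal{H}_{g,1}$). In other words, the odd occurrence of $\alpha $ is absorbed into a commutator with an element of $N$. Your construction forecloses exactly this maneuver: by feeding in only $\alpha ^{\pm 2}$, you force the Johnson image to be divisible by $2$, which is precisely the obstruction computed above.
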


\begin{figure}[h]
\includegraphics[scale=0.90]{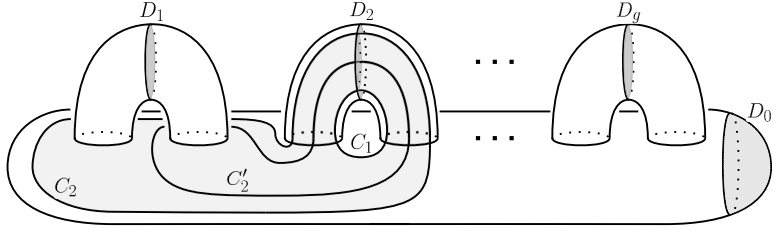}
\caption{The model of $H_g$ and simple closed curves $D_1, D_2, \dots , D_g$, $C_1$, $C_2$ and $C_2^\prime $ on $\Sigma _{g,1}$.}\label{handlebody_scc1}
\end{figure}
\begin{figure}[h]
\includegraphics[scale=0.65]{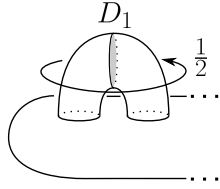}
\caption{The element $\omega $ of $\mathcal{H}_{g,1}$.}\label{handle_half_rotation}
\end{figure}


\section{Generators for the handlebody subgroup of the Torelli group}\label{section_main}

\subsection{Proof of main theorem}\label{proof}
In this section, we prove Theorem~\ref{mainthm}. Let $x_0$ be a point of $\partial D_0$ and let $v_1, v_2, \dots ,v_g$ be generators for the fundamental group $\pi _1(H_g,x_0)$ of $\Sigma _g$ represented by loops on $\Sigma _{g,1}$ based at $x_0$ as in Figure~\ref{handlebody_basis2}. We identify $\pi _1(H_g,x_0)$ with the free group $F_g$ of rank $g$ by the generators. Since $\mathcal{H}_{g,1}$ acts on $\pi _1(H_g,x_0)=F_g$, we have a homomorphism $\eta :\mathcal{H}_{g,1}\rightarrow {\rm Aut}F_g$. Griffiths~\cite{Griffiths} showed that $\eta $ is surjective. Denote by $\mathcal{L}_{g,1}$ the kernel of $\eta $. Luft~\cite{Luft} proved that $\mathcal{L}_{g,1}$ is generated by disk twists. Then we have the exact sequence 
\[
1\longrightarrow \mathcal{L}_{g,1}\longrightarrow \mathcal{H}_{g,1}\stackrel{\eta }{\longrightarrow }{\rm Aut}F_g\longrightarrow 1.
\]
\begin{figure}[h]
\includegraphics[scale=0.7]{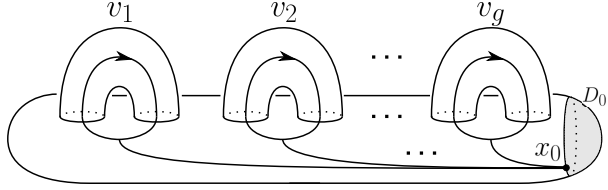}
\caption{Generators $v_1, v_2, \dots ,v_g$ for $\pi _1(H_g,x_0)$.}\label{handlebody_basis2}
\end{figure}
The {\it IA-subgroup} ${\rm IA}_g$ of ${\rm Aut}F_g$ is the kernel of the homomorphism ${\rm Aut}F_g\rightarrow {\rm Aut}\mathbb Z\cong {\rm GL}(g,\mathbb Z)$ induced by the abelianization of $F_g$. Remark that the image $\eta (\mathcal{IH}_{g,1})$ of $\mathcal{IH}_{g,1}$ is included in ${\rm IA}_g$. We define an element $C_{v_1,v_2}$ of ${\rm IA}_g$ by $C_{v_1,v_2}(v_1):=v_2v_1v_2^{-1}$ and $C_{v_1,v_2}(v_k):=v_k$ for $k=2,\dots ,g$. Magnus~\cite{Magnus} proved the following theorem (see also \cite{Day-Putman}).
\begin{thm}[\cite{Magnus}]\label{IA}
For $g\geq 2$, ${\rm IA}_g$ is normally generated in ${\rm Aut}(F_g)$ by $C_{v_1,v_2}$.
\end{thm}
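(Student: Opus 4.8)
The plan is to feed in Magnus's finite generating set for ${\rm IA}_g$ and then run a normal-closure argument, reducing every generator to the single conjugation $C_{v_1,v_2}$. Recall from \cite{Magnus} that ${\rm IA}_g$ is generated by the conjugations $C_{v_i,v_j}$, sending $v_i\mapsto v_jv_iv_j^{-1}$ and fixing $v_l$ for $l\neq i$ (with $i\neq j$), together with the commutator transvections $K_{i,jk}$, sending $v_i\mapsto v_i[v_j,v_k]$ and fixing $v_l$ for $l\neq i$ (with $i,j,k$ distinct). Let $N$ denote the normal closure of $C_{v_1,v_2}$ in ${\rm Aut}(F_g)$. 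Since ${\rm IA}_g$ is normal in ${\rm Aut}(F_g)$ and contains $C_{v_1,v_2}$, we already have $N\subseteq {\rm IA}_g$, so it suffices to show that every Magnus generator lies in $N$.

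First I would dispatch the conjugations. For a permutation $\sigma$ of $\{1,\dots,g\}$, let $P_\sigma\in{\rm Aut}(F_g)$ be the automorphism $v_l\mapsto v_{\sigma(l)}$. Evaluating on the basis gives $P_\sigma C_{v_1,v_2}P_\sigma^{-1}=C_{v_{\sigma(1)},v_{\sigma(2)}}$, so taking $\sigma$ with $\sigma(1)=i$ and $\sigma(2)=j$ shows $C_{v_i,v_j}\in N$ for all distinct $i,j$. This step is routine.

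The main work is to place the commutator transvections inside $N$. Note that products and commutators of the conjugations $C_{v_i,v_j}$ only ever produce automorphisms sending each generator to a conjugate of itself, so they never yield $K_{i,jk}$; the extra room must come from conjugating by an automorphism outside ${\rm IA}_g$. Concretely, let $E\in{\rm Aut}(F_g)$ be the Nielsen transvection $v_i\mapsto v_iv_k$ fixing the remaining generators. I would verify by evaluating both sides on the basis the identity
\[
K_{i,jk}=C_{v_i,v_j}^{-1}\bigl(E\,C_{v_i,v_j}\,E^{-1}\bigr).
\]
The right-hand side is a product of two elements of $N$: the factor $C_{v_i,v_j}^{-1}$ lies in $N$ by the previous paragraph, and $E\,C_{v_i,v_j}\,E^{-1}$ is an ${\rm Aut}(F_g)$-conjugate of $C_{v_i,v_j}\in N$, hence also lies in $N$. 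Thus $K_{i,jk}\in N$.

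The principal obstacle is exactly this last step: since the conjugation automorphisms generate only basis-conjugating automorphisms, reaching $K_{i,jk}$ forces one to conjugate by a genuinely ${\rm GL}$-type element such as $E$, and the content is in identifying the correct conjugator and checking that the resulting product is precisely $K_{i,jk}$ rather than some nearby automorphism. Once all Magnus generators are shown to lie in $N$, we conclude $N={\rm IA}_g$. For $g=2$ there are no transvections $K_{i,jk}$ and ${\rm IA}_2$ is generated by $C_{v_1,v_2}$ and $C_{v_2,v_1}$, so the permutation step alone settles that case.
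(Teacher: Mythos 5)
Your proposal cannot be compared against a proof in the paper, because the paper contains none: Theorem~\ref{IA} is imported as a citation to Magnus (with a pointer to Day--Putman), and the single-generator normal-generation statement is exactly the standard consequence of Magnus's finite generation theorem that you derive. Your derivation is correct. The permutation step checks out: $P_\sigma C_{v_1,v_2}P_\sigma^{-1}=C_{v_{\sigma(1)},v_{\sigma(2)}}$, so all $C_{v_i,v_j}$ lie in the normal closure $N$, and $N\subseteq{\rm IA}_g$ since ${\rm IA}_g$ is normal in ${\rm Aut}(F_g)$ and contains $C_{v_1,v_2}$. Your key identity also verifies: with the convention $(fg)(x)=f(g(x))$ and $[a,b]=aba^{-1}b^{-1}$, for $E\colon v_i\mapsto v_iv_k$ one computes
\[
\bigl(E\,C_{v_i,v_j}\,E^{-1}\bigr)(v_i)=v_jv_iv_kv_j^{-1}v_k^{-1},
\qquad
\Bigl(C_{v_i,v_j}^{-1}\bigl(E\,C_{v_i,v_j}\,E^{-1}\bigr)\Bigr)(v_i)=v_i\,v_jv_kv_j^{-1}v_k^{-1}=v_i[v_j,v_k],
\]
with every $v_l$, $l\neq i$, fixed by both factors, so the product is exactly $K_{i,jk}$; and both factors lie in $N$ by normality. (Under the opposite composition or commutator convention one lands instead on $K_{i,kj}=K_{i,jk}^{-1}$, which serves equally well, so the argument is robust to conventions.) The $g=2$ case is also handled correctly, since Magnus's generating set then consists only of $C_{v_1,v_2}$ and $C_{v_2,v_1}$ (equivalently ${\rm IA}_2={\rm Inn}(F_2)$). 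The one caveat worth recording is that your proof consumes Magnus's finite generating set as a black box; that is unavoidable --- the normal-generation statement is not more elementary than Magnus's theorem, which is precisely why the paper cites rather than proves it --- but it means your write-up is a reduction to \cite{Magnus}, not an independent proof.
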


Since $\eta (t_{C_1}t_{C_2}^{-1})=C_{v_1,v_2}$ and $\eta $ is surjective, we have $\eta (\mathcal{IH}_{g,1})={\rm IA}_g$. Denote by $\mathcal{LI}_{g,1}$ the kernel of the homomorphism $\eta |_{\mathcal{IH}_{g,1}}$. $\mathcal{LI}_{g,1}$ is called the {\it Luft-Torelli group} in~\cite{Pitsch}. Then we have the exact sequence
\begin{eqnarray}\label{exact1} 
1\longrightarrow \mathcal{IL}_{g,1}\longrightarrow \mathcal{IH}_{g,1}\stackrel{\eta |_{\mathcal{IH}_{g,1}}}{\longrightarrow }{\rm IA}_g\longrightarrow 1.
\end{eqnarray} 

A BP (resp. genus-$h$ BP) $\{ c_1,c_2\}$ on $\Sigma _{g,1}$ is a {\it contractible bounding pair (CBP)} (resp. {\it genus-$h$ contractible bounding pair (genus-$h$ CBP)}) if each $c_i$ $(i=1,2)$ bounds a disk in $H_g$. For example, $\{ D_2, D_2^\prime \}$ is a genus-1 CBP on $\Sigma _{g,1}$, where $D_2^\prime $ is a simple closed curve on $\Sigma _{g,1}$ as in Figure~\ref{d2prime}. For a CBP (resp. genus-$h$ CBP) $\{ c_1, c_2\}$ on $\Sigma _{g,1}$, we call $t_{c_1}t_{c_2}^{-1}$ a {\it CBP-map} (genus-$h$ CBP-map). CBP-maps are elements of $\mathcal{IL}_{g,1}$. Pitsch~\cite{Pitsch} proved the following theorem.

\begin{figure}[h]
\includegraphics[scale=0.7]{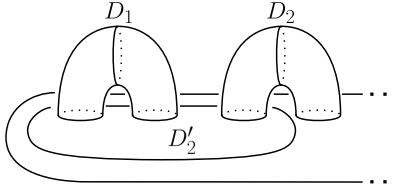}
\caption{Simple closed curve $D_2^\prime $ on $\Sigma _{g,1}$.}\label{d2prime}
\end{figure}

\begin{thm}[\cite{Pitsch}]\label{Luft-Torelli}
For $g\geq 3$, $\mathcal{IL}_{g,1}$ is generated by CBP-maps.
\end{thm}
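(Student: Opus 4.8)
The plan is to present $\mathcal{IL}_{g,1}$ as the kernel of the symplectic representation restricted to the disk-twist group $\mathcal{L}_{g,1}$, and to show that the (abelian) relations of the image can be lifted to products of CBP-maps. First I would record the exact identification $\mathcal{IL}_{g,1}=\ker(\Psi|_{\mathcal{L}_{g,1}})$: since $\mathcal{L}_{g,1}=\ker\eta\subseteq\mathcal{H}_{g,1}$, we have $\mathcal{IL}_{g,1}=\mathcal{L}_{g,1}\cap\mathcal{I}_{g,1}=\mathcal{L}_{g,1}\cap\ker\Psi$. Let $L:=\ker\big({\rm H}_1(\Sigma _g;\mathbb Z)\to {\rm H}_1(H_g;\mathbb Z)\big)$, the Lagrangian subgroup spanned by the meridian classes $[D_1],\dots,[D_g]$. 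For a curve $c$ bounding a disk in $H_g$ one has $[c]\in L$, and $\Psi(t_c)$ is the transvection $x\mapsto x+\langle x,[c]\rangle[c]$; since $L$ is Lagrangian these transvections commute, so the image $U:=\Psi(\mathcal{L}_{g,1})$ is a free abelian group (isomorphic to $\operatorname{Sym}^2 L$, with $\Psi(t_c)$ corresponding to the symmetric square of $[c]$). By the theorem of Luft~\cite{Luft}, $\mathcal{L}_{g,1}$ is generated by disk twists, so $\mathcal{IL}_{g,1}$ is the kernel of the surjection from $\mathcal{L}_{g,1}=\langle\text{disk twists}\rangle$ onto the abelian group $U$.

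Next I would invoke a relation-lifting argument. From the exact sequence $1\to\mathcal{IL}_{g,1}\to\mathcal{L}_{g,1}\to U\to1$, with $\mathcal{L}_{g,1}$ generated by disk twists and $U$ given by an explicit abelian presentation, $\mathcal{IL}_{g,1}$ is the normal closure in $\mathcal{L}_{g,1}$ of a set of lifts of the defining relations of $U$. The point that converts normal generation into generation is that the class of CBP-maps is closed under conjugation by $\mathcal{H}_{g,1}$: for $\varphi\in\mathcal{H}_{g,1}$ we have $\varphi(t_at_b^{-1})\varphi^{-1}=t_{\varphi(a)}t_{\varphi(b)}^{-1}$, and $\{\varphi(a),\varphi(b)\}$ is again a CBP. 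Hence it is enough to show that each relation-lift is itself a product of CBP-maps; its conjugates under $\mathcal{L}_{g,1}\subseteq\mathcal{H}_{g,1}$ are then products of CBP-maps as well, and so the whole normal closure lands in the subgroup generated by CBP-maps.

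The mathematical content is therefore the realization of the relations of $U$, which are of three kinds: commutativity $[\Psi(t_a),\Psi(t_b)]=1$; the quadratic ``parallelogram'' relations, typically $\Psi(t_p)\Psi(t_q)=\Psi(t_r)^2\Psi(t_s)^2$ when $[p]=[r]+[s]$ and $[q]=[r]-[s]$; and $\Psi(t_c)=1$ for separating disk-bounders, where $[c]=0$. Commutativity is lifted trivially by choosing disjoint representatives, so that the disk twists literally commute. The essential work is to realize the parallelogram relations and the separating case: for each I would exhibit a concrete family of disjoint curves bounding disks in $H_g$ with the prescribed homology classes, together with a lantern relation among the corresponding disk twists, and then regroup the resulting disk twists into genuine CBP-maps; a separating disk twist is handled the same way, by writing it as a product of CBP-maps arising from a lantern among meridians.

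The main obstacle is precisely this geometric realization: locating, inside the handlebody, the disk-bounding curves and the lantern/parallelogram configurations that realize the abelian relations of $U$, while keeping the curves sufficiently disjoint that the twists assemble into bona fide CBP-maps. This is the step where the hypothesis $g\ge 3$ should enter, providing the room needed to embed the required four-holed-sphere configurations of meridian curves. Granting these explicit relations, the remainder of the argument is formal, via the exact sequence and the conjugation-invariance of the class of CBP-maps.
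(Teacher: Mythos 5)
First, a remark on the comparison itself: the paper contains no proof of Theorem~\ref{Luft-Torelli} --- it is quoted from Pitsch~\cite{Pitsch}, and the paper's own contribution only begins with Proposition~\ref{improvemnt1}, which upgrades the quoted statement via Johnson's chain trick. So your proposal can only be judged on its own merits, and it has a genuine gap at its core. Your framing is fine: $\mathcal{IL}_{g,1}=\ker (\Psi |_{\mathcal{L}_{g,1}})$, Luft's theorem gives generation of $\mathcal{L}_{g,1}$ by disk twists, the image $U=\Psi (\mathcal{L}_{g,1})$ is abelian, and CBP-maps are closed under conjugation by $\mathcal{H}_{g,1}$. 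The problem is the relation-lifting step. The lemma you invoke must be applied to the generating family consisting of \emph{all} disk twists, and a valid relator set for $U$ on this family necessarily accounts for (i) the words $t_ct_{c'}^{-1}$ whenever $[c]=\pm [c']$, where $c$ and $c'$ may intersect, and (ii) the commutators $[t_c,t_{c'}]$ of \emph{arbitrary} pairs of meridians, again possibly intersecting. Your sentence ``commutativity is lifted trivially by choosing disjoint representatives'' is exactly where this is hidden: you are not free to choose representatives, since the generators are all disk twists rather than one twist per homology class. These words are nontrivial elements of $\mathcal{IL}_{g,1}$ --- for instance $[t_c,t_{c'}]=t_{t_c(c')}t_{c'}^{-1}$ is a twist difference along two homologous meridians that need not be disjoint --- and they are not CBP-maps. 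Proving that a twist difference along two homologous but \emph{intersecting} meridians is a product of CBP-maps (e.g.\ by producing a chain of meridians in the same class with consecutive ones disjoint) is essentially the content of the theorem; your outline presupposes it rather than proves it.

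The geometric realization step also fails as described. Two meridians with classes $r+s$ and $r-s$ can never be made disjoint: if they were, cutting $H_g$ along the disk bounded by the first would exhibit the second as a meridian of a genus-$(g-1)$ handlebody whose class in the new Lagrangian $L/\langle r+s\rangle$ is $2\bar{r}$, which is neither primitive nor zero --- impossible, since a meridian's class is always primitive or trivial. So your ``parallelogram'' relations admit no disjoint realization, and lantern relations cannot substitute for them in the way you suggest: a lantern among meridians holds exactly in $\mathcal{M}_{g,1}$, so its lift is the identity and contributes nothing to normally generating the kernel. (By contrast, regrouping a lantern with one separating boundary meridian into three CBP-maps does handle the separating twists, so that part of your plan is sound, given $g\geq 3$.) Finally, the claim that commutativity, the parallelogram relations, and the separating-twist relations form a complete set of defining relations for $U$ on the given generators is asserted, not proved. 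Until the intersecting-pair relators in (i) and (ii) above are expressed as products of CBP-maps, the argument does not close up.
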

By Johnson's argument~\cite{Johnson1}, this theorem is improved as follows.
\begin{prop}\label{improvemnt1}
For $g\geq 3$, $\mathcal{IL}_{g,1}$ is normally generated in $\mathcal{H}_{g,1}$ by a genus-1 CBP-map.
\end{prop}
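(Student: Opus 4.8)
The plan is to upgrade Theorem~\ref{Luft-Torelli} from a mere generating statement to a normal-generating statement by showing that any two genus-1 CBP-maps are conjugate in $\mathcal{H}_{g,1}$, and that every CBP-map can be expressed as a product of conjugates of genus-1 CBP-maps. This is exactly the pattern of Johnson's argument in~\cite{Johnson1}, where he reduces a generating set of BP-maps to a single normal generator by exploiting the transitivity of the mapping class group action on the relevant topological configurations and a ``chain relation'' that writes a higher-genus BP-map in terms of lower-genus ones.

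First I would establish a transitivity lemma: the handlebody group $\mathcal{H}_{g,1}$ acts transitively on the set of genus-1 CBPs on $\Sigma_{g,1}$ (for $g\geq 3$). A genus-1 CBP is a pair of meridian curves bounding a genus-1 subsurface with the two curves bounding disks in $H_g$; up to the action of $\mathcal{H}_{g,1}$ on such configurations, one wants a single orbit. I would verify transitivity by using the change-of-coordinates principle inside the handlebody: any genus-1 CBP determines a standard handlebody-splitting configuration, and since $\eta:\mathcal{H}_{g,1}\to \mathrm{Aut}\,F_g$ is surjective (Griffiths) while disk twists generate $\mathcal{L}_{g,1}$ (Luft), one has enough symmetries to carry any genus-1 CBP to the reference pair $\{D_2,D_2'\}$. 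Transitivity immediately gives that all genus-1 CBP-maps are conjugate in $\mathcal{H}_{g,1}$, hence all lie in the normal closure of a single one.

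Next I would handle higher-genus CBP-maps. The key is a relation expressing a genus-$h$ CBP-map as a product of $\mathcal{H}_{g,1}$-conjugates of genus-1 CBP-maps. Johnson's original lumping/unlumping technique does this for ordinary BP-maps using a lantern-type or chain relation among Dehn twists; the same relation survives here provided all curves involved bound disks in $H_g$, which one can arrange by choosing the intermediate separating curves to be meridians. Concretely, if $\{c_1,c_2\}$ bounds a genus-$h$ subsurface $S$ with both $c_i$ meridians, I would choose an interior meridian $c$ splitting $S$ into a genus-1 piece and a genus-$(h-1)$ piece, so that $t_{c_1}t_{c_2}^{-1}=(t_{c_1}t_{c}^{-1})(t_{c}t_{c_2}^{-1})$ realizes the genus-$h$ CBP-map as a product of a genus-1 and a genus-$(h-1)$ CBP-map; induction on $h$ then reduces everything to genus-1. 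Combined with Theorem~\ref{Luft-Torelli}, which says $\mathcal{IL}_{g,1}$ is generated by all CBP-maps, this shows $\mathcal{IL}_{g,1}$ is normally generated in $\mathcal{H}_{g,1}$ by a single genus-1 CBP-map.

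The main obstacle I anticipate is the transitivity step: one must check that the action on genus-1 CBPs is transitive not merely under the full mapping class group (where it is standard) but under the smaller handlebody group $\mathcal{H}_{g,1}$, and one must track orientations and the meridian/disk-bounding condition carefully so that the conjugating element genuinely extends over $H_g$. The algebraic handle on this is the exact sequence $1\to\mathcal{L}_{g,1}\to\mathcal{H}_{g,1}\xrightarrow{\eta}\mathrm{Aut}\,F_g\to 1$: one would first realize the required permutation of the basis elements $v_i$ at the level of $\mathrm{Aut}\,F_g$ and then correct by disk twists in $\mathcal{L}_{g,1}$, being careful that the disk-bounding property of each curve in the CBP is preserved throughout. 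Once transitivity is secured, the genus-reduction relation is a routine consequence of the standard Dehn twist calculus, so the crux of the proof really lies in verifying that $\mathcal{H}_{g,1}$ acts transitively on genus-1 CBPs.
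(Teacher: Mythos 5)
Your proposal is correct and follows essentially the same route as the paper: the paper also uses Johnson's two-step pattern, writing a genus-$h$ CBP-map as a telescoping product $t_{c_1}t_{c_2}^{-1}=(t_{\partial e_1}t_{\partial e_2}^{-1})(t_{\partial e_2}t_{\partial e_3}^{-1})\cdots(t_{\partial e_h}t_{\partial e_{h+1}}^{-1})$ along a chain of disks $e_1=d_1,e_2,\dots,e_{h+1}=d_2$ inside the genus-$h$ handlebody cut off by the two disks (your induction on $h$ is the same decomposition, just organized one splitting at a time), while the conjugacy of all genus-1 CBP-maps under $\mathcal{H}_{g,1}$ is left implicit there. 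Your explicit attention to that transitivity step is warranted --- it genuinely requires the disk-bounding hypothesis, since Proposition~\ref{hbp_notconj} shows the analogous transitivity fails for genus-1 HBPs --- though the clean way to prove it is the direct cut-and-reglue argument (as in the proof of Proposition~\ref{conj_hbpmap}) rather than lifting automorphisms through $\eta$.
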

\begin{proof}
Let $\{ c_1,c_2\}$ be a genus-$h$ CBP on $\Sigma _{g,1}$. Without loss of generality, we can assume that each $c_i$ $(i=1,2)$ doesn't intersect with $D_0$. Take proper disks $d_1$ and $d_2$ in $H_g$ such that $\partial d_i=c_i$ for $i=1,2$. By cutting $H_g$ along $d_1\sqcup d_2$, we obtain a handlebody $H$ of genus $h$ which doesn't include $D_0$. Then there exist proper disjoint disks $d_1=e_1, e_2, \dots , e_{h+1}=d_2$ in $H$ such that the result of cutting $H$ along $e_1\sqcup e_2\sqcup \cdots \sqcup e_{h+1}$ is a disjoint union of $h$ handlebodyies of genus $1$, $e_i$ and $e_{i+1}$ lie on a boundary of the same component for $i=1,2,\dots ,h$, and $e_i$ and $e_j$ don't lie on the same component for $|i-j|>1$ (see Figure~\ref{proof_johnson1}). Then we have
\begin{eqnarray*}
t_{c_1}t_{c_2}^{-1}&=&t_{\partial e_1}t_{\partial e_{h+1}}^{-1}\\
&=&(t_{\partial e_1}t_{\partial e_2}^{-1})(t_{\partial e_2}t_{\partial e_3}^{-1})\cdots (t_{\partial e_{h-1}}t_{\partial e_h}^{-1})(t_{\partial e_h}t_{\partial e_{h+1}}^{-1}).
\end{eqnarray*}
Since each $t_{\partial e_i}t_{\partial e_{i+1}}^{-1}$ $(i=1,2,\dots ,h)$ is a genus-$1$ CBP-map, $t_{c_1}t_{c_2}^{-1}$ is a product of genus-$1$ CBP-maps. We get Proposition~\ref{improvemnt1}.
\end{proof}

\begin{figure}[h]
\includegraphics[scale=1.4]{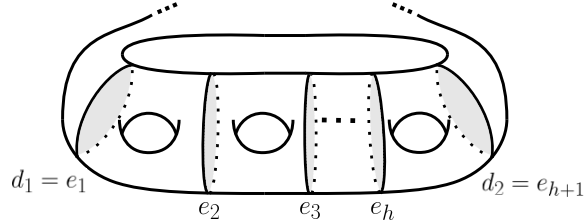}
\caption{Disks $d_1=e_1, e_2, \dots , e_{h+1}=d_2$ on $H$.}\label{proof_johnson1}
\end{figure}

\begin{proof}[Proof of Theorem~\ref{mainthm}]
By the exact sequence~(\ref{exact1}) and Proposition~\ref{improvemnt1}, $\mathcal{IH}_{g,1}$ is normally generated in $\mathcal{H}_{g,1}$ by $t_{C_1}t_{C_2}^{-1}$ and $t_{D_2}t_{D_2^\prime }^{-1}$. Hence it is enough for the proof of Theorem~\ref{mainthm} to show that $t_{D_2}t_{D_2^\prime }^{-1}$ is a product of conjugations of $t_{C_1}t_{C_2}^{-1}$ in $\mathcal{IH}_{g,1}$. Since $(t_{C_1}t_{C_2}^{-1})^{-1}(D_2)=D_2^\prime $, we have
\begin{eqnarray*}
t_{D_2}t_{D_2^\prime }^{-1}&=&t_{D_2}\cdot (t_{C_1}t_{C_2}^{-1})^{-1}t_{D_2}^{-1}(t_{C_1}t_{C_2}^{-1})\\
&=&t_{D_2}(t_{C_1}t_{C_2}^{-1})^{-1}t_{D_2}^{-1}\cdot t_{C_1}t_{C_2}^{-1}.
\end{eqnarray*}
We have completed the proof of Theorem~\ref{mainthm}.
\end{proof}

\begin{rem}\label{pushing_rel}
The last relation 
\[
t_{D_2}t_{D_2^\prime }^{-1}=t_{D_2}(t_{C_1}t_{C_2}^{-1})^{-1}t_{D_2}^{-1}\cdot t_{C_1}t_{C_2}^{-1}
\]
in the proof of Theorem~\ref{mainthm} has the following geometric meaning. Let $E_1$ be a separating disk in $H_g$ as in Figure~\ref{composite_rel1}. Then we can regard $t_{D_2}t_{D_2^\prime }^{-1}$, $t_{C_1}t_{C_2}^{-1}$ and $t_{D_2}(t_{C_1}t_{C_2}^{-1})^{-1}t_{D_2}^{-1}$ as pushing maps of $E_1$ along simple loops on the boundary of the closure of the complement of the first 1-handle. $t_{C_1}t_{C_2}^{-1}$ is obtained from the pushing map along $\gamma _1$ and $t_{D_2}(t_{C_1}t_{C_2}^{-1})^{-1}t_{D_2}^{-1}$ is obtained from the pushing map along $\gamma _2$ as in Figure~\ref{composite_rel1}. The above relation means a product of pushing maps along simple loops which intersect transversely once is equal to the pushing map along the product of these loops.  
\end{rem}

\begin{figure}[h]
\includegraphics[scale=0.80]{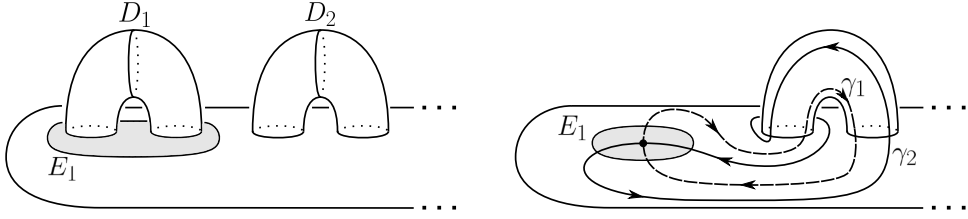}
\caption{Disk $E_1$ in $H_g$ and loops $\gamma _1$ and $\gamma _2$.}\label{composite_rel1}
\end{figure}

\subsection{A Condition for conjugations of genus-1 HBP-maps in the handlebody group}\label{section_conjugation}

In this section, we give a necessary and sufficient condition that a genus-1 HBP-map is conjugate to $t_{C_1}t_{C_2}^{-1}$ in $\mathcal{H}_{g,1}$. For proper disks $d_1, d_2, \dots , d_g$ in $H_g-{\rm int}D_0$, the pair $\{ d_1, d_2, \dots , d_g\}$ is a {\it meridian disk system} if each $d_i$ $(i=1,2,\dots ,g)$ is non-separating and we obtain a 3-ball by cutting $H_g$ along $d_1\sqcup d_2\sqcup  \cdots \sqcup d_g$. For example, $\{ \overline{D}_1, \overline{D}_2, \dots , \overline{D}_g\}$ is a meridian disk system, where $\overline{D}_1, \overline{D}_2, \dots , \overline{D}_g$ are disks in $H_g$ whose boundary components are $D_1, D_2, \dots , D_g$ as in Figure~\ref{handlebody_scc1}, respectively. Then we have the following proposition.

\begin{prop}\label{conj_hbpmap}
Let $\{ c_1,c_2\}$ be a genus-1 HBP on $\Sigma _{g,1}$. Then $t_{c_1}t_{c_2}^{-1}$ is conjugate to $t_{C_1}t_{C_2}^{-1}$ in $\mathcal{H}_{g,1}$ if and only if there exist a properly embedded annulus $A$ in $H_g$ whose boundary is $c_1\sqcup c_2$ and a meridian disk system $\{ d_1, d_2, \dots , d_g\}$ such that $d_2, \dots , d_g$ are disjoint from $A$ and the intersection of $d_1$ and $A$ is an arc which doesn't separate $A$.
\end{prop}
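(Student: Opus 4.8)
The plan is to recast the statement as a question about \emph{configurations}: the pair $t_{C_1}t_{C_2}^{-1}$ and $t_{c_1}t_{c_2}^{-1}$ are conjugate in $\mathcal{H}_{g,1}$ exactly when the standard annulus-plus-disk-system datum can be carried to the given one by a diffeomorphism of $H_g$ fixing $D_0$. Write $A_0$ for the annulus in $H_g$ bounded by $C_1\sqcup C_2$ and $\{\overline{D}_1,\dots,\overline{D}_g\}$ for the standard meridian disk system of Figure~\ref{handlebody_scc1}. A direct inspection of the figure shows that $\overline{D}_2,\dots,\overline{D}_g$ are disjoint from $A_0$, while $\overline{D}_1\cap A_0$ is a single arc joining $C_1$ to $C_2$, hence a spanning (non-separating) arc of $A_0$. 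Thus the \emph{standard} configuration already satisfies the conditions of the proposition, and this observation feeds both implications.

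For the \textbf{only if} direction, suppose $t_{c_1}t_{c_2}^{-1}=\varphi\,(t_{C_1}t_{C_2}^{-1})\,\varphi^{-1}$ with $\varphi\in\mathcal{H}_{g,1}$. Since $\varphi\,(t_{C_1}t_{C_2}^{-1})\,\varphi^{-1}=t_{\varphi(C_1)}t_{\varphi(C_2)}^{-1}$ and a genus-$1$ BP-map determines its ordered bounding pair, one gets $\varphi(C_1)=c_1$ and $\varphi(C_2)=c_2$. Choosing a diffeomorphism $\Phi$ of $H_g$ extending $\varphi$ and setting $A:=\Phi(A_0)$ and $d_i:=\Phi(\overline{D}_i)$, the image is a properly embedded annulus bounded by $c_1\sqcup c_2$ together with a meridian disk system; all incidence conditions (disjointness of $A$ from $d_2,\dots,d_g$, and a single spanning intersection arc with $d_1$) are preserved because $\Phi$ is a diffeomorphism. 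This produces the required data.

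For the \textbf{if} direction, I would construct a diffeomorphism $\Phi$ of $H_g$ fixing $D_0$ with $\Phi(A_0)=A$ and $\Phi(\overline{D}_i)=d_i$, so that $\Phi(C_i)=c_i$ and $\varphi:=\Phi|_{\partial H_g}$ conjugates $t_{C_1}t_{C_2}^{-1}$ to $t_{c_1}t_{c_2}^{-1}$. The construction is a two-stage change of coordinates. First, using that $\mathcal{H}_{g,1}$ acts transitively on meridian disk systems (a standard fact) together with the handle-permuting diffeomorphisms, I reduce to the case $d_i=\overline{D}_i$, with $\overline{D}_1$ the disk meeting $A$; the annulus is transported along and keeps the same incidence data. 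Second, cutting $H_g$ along $\{\overline{D}_i\}$ gives a $3$-ball $B_0$ whose boundary sphere carries the $2g$ disk scars and $D_0$ in a standard pattern, and the incidence hypotheses cut $A$ (resp.\ $A_0$) along its single arc on $\overline{D}_1$ into a properly embedded disk $A'$ (resp.\ $A_0'$) in $B_0$. The hypotheses then force $\partial A'$ and $\partial A_0'$ to occupy the same combinatorial position on $\partial B_0$, so they are isotopic there through an isotopy fixing the scars and $D_0$; this isotopy extends over $B_0$ and carries $A'$ to $A_0'$, since properly embedded disks in a ball with isotopic boundary circles are isotopic. Regluing the scars yields the desired $\Phi$.

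I expect the main obstacle to be the second stage: one must check that ``$d_2,\dots,d_g$ disjoint from $A$'' and ``$d_1\cap A$ a single non-separating arc'' genuinely pin down the isotopy class of $\partial A'$ on the cut boundary, relative to the scar pattern and to $D_0$, so that the ambient isotopy in $B_0$ is compatible with regluing. A secondary point is the labeling: the construction naturally matches the unordered pair $\{c_1,c_2\}$ to $\{C_1,C_2\}$, and to obtain conjugacy to $t_{C_1}t_{C_2}^{-1}$ rather than to its inverse one must arrange $\Phi(C_1)=c_1$; this ambiguity is harmless once one records an element of $\mathcal{H}_{g,1}$ (such as the half-twist $\omega$) interchanging $C_1$ and $C_2$, which conjugates $t_{C_1}t_{C_2}^{-1}$ to $t_{C_2}t_{C_1}^{-1}=(t_{C_1}t_{C_2}^{-1})^{-1}$.
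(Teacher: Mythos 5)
Your ``only if'' direction is correct and coincides with the paper's: the conjugating mapping class extends to a diffeomorphism of $H_g$ fixing $D_0$, a BP-map determines its ordered pair of curves, and one transports the standard annulus and meridian system. (Whether the standard disk meeting $A_0$ is $\overline{D}_1$ or, as in the paper's figure, $\overline{D}_2$ is immaterial; one relabels the image system, exactly as the paper does.) Your ``if'' direction also has the same overall architecture as the paper's: cut $H_g$ along the disk system to get a ball, note that the non-separating arc condition turns the annulus into a properly embedded disk in that ball, match boundary combinatorics, invoke uniqueness of disks in a ball, and reglue.

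But the ``if'' direction contains a genuine gap, which you flag yourself without closing: the claim that the hypotheses ``force $\partial A'$ and $\partial A_0'$ to occupy the same combinatorial position on $\partial B_0$,'' via an isotopy compatible with regluing, is asserted rather than proved, and it is precisely the mathematical content of the proposition. Concretely, $\partial A'$ is a circle on the sphere $\partial B_0$ meeting the scars $\overline{D}_{1,1}$ and $\overline{D}_{1,2}$ in one arc each, and it separates $\partial B_0$ into two disks; to match it to $\partial A_0'$ one must know how the remaining $2(g-1)$ scars and $D_0$ distribute between the two sides. This is exactly where the genus-$1$ HBP hypothesis must be used, and your argument never uses it. What has to be shown (and is the paper's key step, justified by ``since $\{c_1,c_2\}$ is a genus-$1$ HBP'') is: the annulus $A$ separates $H_g$ because $c_1\cup c_2$ separates $\Sigma_g$, so both scars of each $d_i$ with $i\geq 2$ lie on the same side of $A'$; and since one complementary subsurface of the HBP has genus $1$, a count of handles shows that exactly one scar pair $\{d_{i_0,1},d_{i_0,2}\}$, $i_0\in\{2,\dots,g\}$, lies on that side, with all other pairs and $D_0$ on the other side. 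Two further points are needed to finish: your first stage only normalizes which disk meets $A$, not which pair sits on the genus-$1$ side, so an additional handle permutation is required before the two configurations can agree; and the ambient isotopy of $\partial B_0$ cannot fix the scars $\overline{D}_{1,1},\overline{D}_{1,2}$ pointwise (it must move the arcs $\delta$), so it has to preserve them setwise and commute with the regluing identification of $\overline{D}_{1,1}$ with $\overline{D}_{1,2}$. The paper secures this compatibility by cutting additionally along the disk obtained from $A$ into two balls and building diffeomorphisms of each piece, compatible with regluing, from the isotopy class of the center-line arc of the annulus. Without these verifications, your second stage restates the proposition instead of proving it.
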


\begin{proof}
We suppose that $t_{c_1}t_{c_2}^{-1}$ is conjugate to $t_{C_1}t_{C_2}^{-1}$ in $\mathcal{H}_{g,1}$. Then there exists a diffeomorphism $f:H_g\rightarrow H_g$ such that the restriction $f|_{D_0}$ is identity map on $D_0$ and $f(c_i)=C_i$ $(i=1,2)$. By Figure~\ref{handlebody_scc1}, there exists a properly embedded annulus $A_0$ in $H_g$ whose boundary is $C_1\sqcup C_2$ such that the intersection of $\overline{D}_2$ and $A_0$ is an arc which doesn't separate $A_0$ and $\overline{D}_1, \overline{D}_3, \dots , \overline{D}_g$ are disjoint from $A_0$. Thus $A:=f(A_0)$, $d_1:=f(\overline{D}_2)$, $d_2:=f(\overline{D}_1)$, $d_3:=f(\overline{D}_3)$, $\dots , d_g:=f(\overline{D}_g)$ satisfy the condition above. We have proved the ``only if'' part of the proposition.

We suppose that there exist a properly embedded annulus $A$ in $H_g$ whose boundary is $c_1\sqcup c_2$ and a meridian disk system $\{ d_1, d_2, \dots , d_g\}$ such that $d_2, \dots , d_g$ are disjoint from $A$ and the intersection of $d_1$ and $A$ is an arc $\delta $ which doesn't separate $A$. Note that the arc $\delta $ separates $d_1$ into two disks $e^\prime $ and $e^{\prime \prime }$ in $H_g$. Let $B$ be a 3-ball which is obtained by cutting $H_g$ along $d_1\sqcup d_2\sqcup \cdots \sqcup d_g$. Since $\delta $ doesn't separate $A$, the image $\overline{A}$ of $A$ in $B$ is a proper disk in $B$. Hence $\overline{A}$ separates $B$ into 3-balls $B^\prime $ and $B^{\prime \prime }$. Without loss of generality, we can assume that the copies $e_1^\prime $ and $e_2^\prime $ of $e^\prime $ and copies $d_{i_0,1}$ and $d_{i_0,2}$ of $d_{i_0}$ are included in $B^\prime $ for some $i_0\in \{ 2,\dots ,g\}$, and the copies $e_1^{\prime \prime }$ and $e_2^{\prime \prime }$ of $e^{\prime \prime }$ and the copies $d_{i,1}$ and $d_{i,2}$ of $d_i$ are included in $B^{\prime \prime }$ for any $i\in \{ 2,\dots ,g\} -\{ i_0\}$ since $\{ c_1,c_2\}$ is a genus-1 HBP on $\Sigma _{g,1}$. Denote by $\overline{A}^\prime $ and $\overline{A}^{\prime \prime }$ the images of $\overline{A}$ in $B^\prime $ and $B^{\prime \prime }$, respectively. 

Let $V^\prime $ and $V^{\prime \prime }$ be the handlebodies which obtained by cutting $H_g$ along $A_0$ such that $V^\prime $ is diffeomorphic to $H_2$ and $V^{\prime \prime }$ is diffeomorphic to $H_{g-1}$ and let $B_0$, $B_0^\prime $ and $B_0^{\prime \prime }$ be the 3-balls which obtained by cutting $H_g$, $V^\prime $ and $V^{\prime \prime }$ along $\overline{D}_1\sqcup \cdots \sqcup \overline{D}_g$, respectively. Denote by $\overline{A_0}^\prime $ and $\overline{A_0}^{\prime \prime }$ the disks on $\partial B_0^\prime $ and $\partial B_0^{\prime \prime }$ which are obtained from $A$, by $\overline{D}_{j,1}$ and $\overline{D}_{j,2}$ are copies of disk $\overline{D}_j$ on $\partial B_0$ for $j\in \{ 1,\dots ,g\}$ and by $e_{0,k}^\prime $ and $e_{0,k}^{\prime \prime }$ the disks on $\partial B_0^\prime $ and $\partial B_0^{\prime \prime }$ which are obtained from $\overline{D}_{2,k}$ by cutting $\overline{D}_{2,k}$ along $A_0$ for $i\in \{ 1,2\}$, respectively. For $j\in \{ 1,3,\dots ,g\}$ and $k\in \{ 1,2\}$, we regard $\overline{D}_{j,k}$ as a disk in $\partial B_0^\prime \sqcup \partial B_0^{\prime \prime }$.
Since the isotopy classes of $\overline{A}^\prime $ and $\overline{A}^{\prime \prime }$ in $B^\prime $ and $B^{\prime \prime }$ (resp. $\overline{A_0}^\prime $ and $\overline{A_0}^{\prime \prime }$ in $B_0^\prime $ and $B_0^{\prime \prime }$) fixed $e_k^\prime $, $e_k^{\prime \prime }$ and $d_{i,k}$ ($i\in \{ 2,\dots ,g\}$, $k\in \{ 1,2\}$) (resp. $e_{0,k}^\prime $, $e_{0,k}^{\prime \prime }$ and $\overline{D}_{j,k}$ ($j\in \{ 1,3,\dots ,g\}$, $k\in \{ 1,2\}$)) depend on the isotopy classes of arcs which obtained from the center line of $A$ (resp. $A_0$), there exist orientation preserving diffoemorphisms $f^\prime :B^\prime \rightarrow B_0^\prime$ and $f^{\prime \prime }:B^{\prime \prime }\rightarrow B_0^{\prime \prime }$ such that $f^\prime (\overline{A}^\prime )=\overline{A_0}^\prime $, $f^{\prime \prime }(\overline{A}^{\prime \prime })=\overline{A_0}^{\prime \prime }$, the restriction $f^{\prime \prime }|_{D_0}$ is the identity map and $f^\prime $ and $f^{\prime \prime }$ are compatible with regluing of $B^\prime , B^{\prime \prime }, B_0^\prime , B_0^{\prime \prime }$ along $d_{i,k}$, $e_k^\prime $, $e_k^{\prime \prime }$, $\overline{D}_{j,k}$, $e_{0,k}^\prime $ and $e_{0,k}^{\prime \prime }$ for $i\in \{ 2,\dots ,g\}$, $j\in \{ 1,3,\dots ,g\}$ and $k\in \{ 1,2\}$. Such diffeomorphisms induce the diffeomorphism $\tilde{f}:H_g\rightarrow H_g$ such that $\tilde{f}(A)=A_0$ and $\tilde{f}|_{D_0}={\rm id}_{D_0}$. Thus $t_{c_1}t_{c_2}^{-1}$ is conjugate to $t_{C_1}t_{C_2}^{-1}$ in $\mathcal{H}_{g,1}$ and we have completed the proof of this proposition.
\end{proof}

Let $C_1^m$ and $C_2^m$ be simple closed curves on $\Sigma _{g,1}$ as in Figure~\ref{bpmap_e1e2} for $m\geq 2$. Since the union $C_1^m\sqcup C_2^m$ bounds an annulus $A_m$ in $H_g$ which intersects with $\overline{D}_1$ at $m$ proper arcs in $\overline{D}_1$ as in Figure~\ref{bpmap_e1e2}, $\{ C_1^m,C_2^m\}$ is a genus-1 HBP on $\Sigma _{g,1}$. Note that such an annulus is unique up to isotopy by the irreducibility of $H_g$. Then we show that $t_{C_1^m}t_{C_2^m}^{-1}$ is not conjugate to $t_{C_1}t_{C_2}^{-1}$ in $\mathcal{H}_{g,1}$ by Proposition~\ref{conj_hbpmap} and the next proposition.

\begin{prop}\label{hbp_notconj}
For $m\geq 2$, there does not exist a proper disk $D$ in $H_g$ which transversely intersects with $A_m$ at a proper arc in $D$ and separates $A_m$ into a disk.
\end{prop}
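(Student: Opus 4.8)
The plan is to argue by contradiction using the induced action on $\pi_1(H_g)=F_g$, exploiting the fact that the core of $A_m$ represents a proper power. Let $\kappa$ denote a core curve of the annulus $A_m$. Since $A_m$ meets the meridian disk $\overline{D}_1$ in exactly $m$ coherently oriented arcs and is disjoint from $\overline{D}_2,\dots,\overline{D}_g$, the free homotopy class of $\kappa$ in $\pi_1(H_g)=F_g$ is the conjugacy class of $v_1^m$. For $m\geq 2$ this is a proper power, so its image $m[v_1]$ in $H_1(H_g;\mathbb Z)\cong\mathbb Z^g$ is divisible; in particular $[\kappa]=v_1^m$ is \emph{not} a primitive element of $F_g$, i.e.\ it lies in no free basis. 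I will suppose that a proper disk $D$ as in the statement exists and deduce that $[\kappa]$ \emph{is} primitive, a contradiction.

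Assume $D\cap A_m=\delta$ is a single arc, proper in $D$, which spans $A_m$, so that cutting $A_m$ along $\delta$ yields a disk. A core $\kappa$ of $A_m$ may be isotoped inside $A_m$ to meet the spanning arc $\delta$ transversely in exactly one point. Since $\kappa\subset A_m$ and $D\cap A_m=\delta$, we have $\kappa\cap D=\kappa\cap\delta$, so $\kappa$ meets $D$ transversely in a single point. The first consequence is that $D$ is non-separating in $H_g$: if $D$ separated $H_g$, then traversing $\kappa$ once would pass from one component of $H_g\setminus D$ to the other and fail to close up, so a loop meeting a separating disk in a single transverse point is impossible. Being non-separating, $D$ extends to a meridian disk system $\{d_1,\dots,d_g\}$ with $d_1=D$: cut $H_g$ along $D$ to obtain a handlebody of genus $g-1$ and adjoin one of its meridian systems, chosen inside the cut-open handlebody so that $d_2,\dots,d_g$ are disjoint from $d_1$ and do not alter $\kappa\cap d_1$. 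This system determines a free basis $v_1',\dots,v_g'$ of $\pi_1(H_g)=F_g$ dual to $d_1,\dots,d_g$.

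Now I read off the free homotopy class $[\kappa]$ as a cyclic word in $v_1',\dots,v_g'$ by recording the signed crossings of $\kappa$ with $d_1,\dots,d_g$. Because $\kappa$ meets $d_1$ in exactly one point, the letter $v_1'$ occurs exactly once in this cyclic word, and a lone letter has no inverse partner with which to cancel, so the cyclically reduced word representing $[\kappa]$ still contains $v_1'$ exactly once. Primitivity is invariant under inversion and cyclic permutation, so after possibly replacing $[\kappa]$ by its inverse and cyclically permuting we may write this reduced word as $v_1'u$ with $u\in\langle v_2',\dots,v_g'\rangle$. The elementary automorphism of $F_g$ fixing $v_2',\dots,v_g'$ and sending $v_1'\mapsto v_1'u$ then carries the basis element $v_1'$ to a representative of $[\kappa]$; hence $[\kappa]$ is primitive in $F_g$. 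This contradicts the non-primitivity of $v_1^m$ established above, and the contradiction proves the proposition.

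The step I expect to be the main obstacle is the passage from the single \emph{geometric} intersection point $\kappa\cap D$ to the assertion that the dual generator appears exactly once in a \emph{reduced} representative of $[\kappa]$: one must rule out spurious cancellations (handled by noting that the single $v_1'$ has no partner) and must complete $D$ to a meridian disk system without creating additional crossings with $d_1$ (handled by choosing $d_2,\dots,d_g$ in $H_g$ cut along $D$). The conceptual heart is the identification $[\kappa]=v_1^m$, which selects non-primitivity as the algebraic invariant to contradict and is exactly where the hypothesis $m\geq 2$ enters.
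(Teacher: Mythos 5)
Your proof is correct, but it takes a genuinely different route from the paper's. The paper argues inside the solid torus $V$ obtained by cutting $H_g$ along $\overline{D}_2,\dots ,\overline{D}_g$: it performs surgery on the hypothetical disk $D$ along those meridian disks to produce a disk $d_{i_0}$ in $V$ that still meets $A_m$ in the single spanning arc, notes that $d_{i_0}$ is non-separating in $V$ (its boundary crosses each $C_k^m$ once) and hence isotopic to the meridian disk $\overline{D}_1$, and derives a contradiction with the assertion, read off from Figure~\ref{solidtorus1}, that no ambient isotopy of $V$ makes $\overline{D}_1\cap A_m$ a single arc. You instead convert the hypothesis into algebra: a disk met once transversely by the core $\kappa$ of $A_m$ must be non-separating, extends to a meridian disk system, and forces the cyclically reduced word for $[\kappa ]$ in the dual free basis to contain the dual generator exactly once, whence $[\kappa ]$ is primitive in $F_g$; this contradicts the divisibility of the homology class $m[v_1]$ of $\kappa$. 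Each step of yours is sound: the lone-letter non-cancellation, the transvection automorphism $v_1'\mapsto v_1'u$, and the fact that primitive elements of $F_g$ have indivisible abelianizations. As for what each approach buys: the paper's proof stays elementary and geometric but leans on a minimal-position fact that is checked only by a figure, whereas your primitivity invariant replaces that fact with a clean algebraic obstruction --- indeed your argument proves the paper's figure-assertion as a corollary (any meridian disk of $V$ meeting $A_m$ in one spanning arc would make $v_1^m$ primitive). Both proofs must read the structure of $A_m$ itself off the figure; you need that its $m$ intersection arcs with $\overline{D}_1$ are spanning and coherently oriented, so that $\kappa$ is freely homotopic to $v_1^m$, which is the intended picture and is exactly where the hypothesis $m\geq 2$ enters in both arguments.
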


\begin{proof}
Suppose that there exists a proper disk $D$ in $H_g$ which transversely intersects with $A_m$ at a proper arc in $D$ and separates $A_m$ into a disk. Denote by $\delta $ the proper arc in $D$. For proper disks $d$ and $d^\prime $ in $H_g$ whose intersection is disjoint union of proper arcs in $d^\prime $, we obtain disks $\tilde{d}_1, \tilde{d}_2, \dots ,\tilde{d}_n$ in $H_g$ from the disk $d$ by cutting $d$ along $d^\prime $. Then there exist disks $e_1, e_2, \dots ,e_n$ in $d^\prime $ such that $\tilde{d}_1\cup e_1, \tilde{d}_2\cup e_2, \dots ,\tilde{d}_n\cup e_n$ are proper disks in $H_g$ and each $\tilde{d}_i\cup e_i$ $(i=1,\dots ,n)$ is isotopic to a proper disk $d_i$ in $H_g$ which doesn't intersect with $d^\prime $ and the other $d_j$. We call the operation which gives disjoint disks $d_1,d_2,\dots ,d_n$ from the disk $d$ the {\it surgery} on $d$ along $d^\prime $.

By the irreducibility of $H_g$, we can assume that the intersection of $D$ and $\overline{D}_2\sqcup \dots \sqcup \overline{D}_g$ is a disjoint union of proper arcs in $\overline{D}_2\sqcup \dots \sqcup \overline{D}_g$. Let $d_1,d_2,\dots ,d_n$ be proper disks in $H_g$ which are obtained from $D$ by the surgery on $D$ along $\overline{D}_2, \dots , \overline{D}_g$ and let $V$ be the solid torus which obtained from $H_g$ by cutting $H_g$ along $\overline{D}_2, \dots , \overline{D}_g$. Since $d_1,d_2,\dots ,d_n$, $\overline{D}_1$, $A_m$ and $\delta $ don't intersect $\overline{D}_2, \dots , \overline{D}_g$, we regard $d_1,d_2,\dots ,d_n$, $\overline{D}_1$, $A_m$ and $\delta $ as proper disks, a proper annulus and a proper arc in $V$. Note that the intersection of $A_m$ and $\overline{D}_1$ in $V$ is not a single arc up to ambient isotopy of $V$ (see Figure~\ref{solidtorus1}). Then there exists $i_0\in \{ 1, 2, \dots , n\}$ such that the proper disk $d_{i_0}$ in $V$ intersects with $A_m$ at the arc $\delta $. Since $\partial d_{i_0}\subset \partial V$ transversely intersects with each $C_k^m$ $(k=1,2)$ at one point, $d_{i_0}$ is a non-separating disk in $V$. Hence $d_{i_0}$ is isotopic to $\overline{D}_1$ in $V$ by forgetting the copies of $\overline{D}_2\sqcup \dots \sqcup \overline{D}_g$ throughout the isotopy. This is a contradiction to the fact that the intersection of $A_m$ and $\overline{D}_1$ in $V$ is not a single arc. We have completed the proof of this proposition.
\end{proof}

\begin{figure}[h]
\includegraphics[scale=0.90]{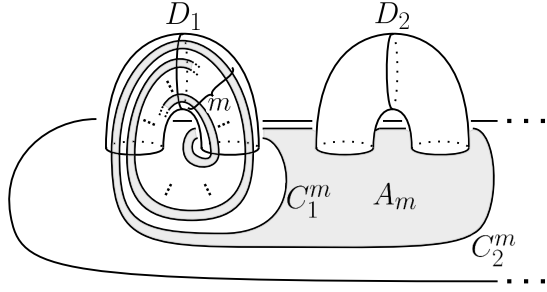}
\caption{Simple closed curves $C_1^m$ and $C_2^m$ in $\Sigma _{g,1}$ which bound an annulus $A_m$ in $H_g$.}\label{bpmap_e1e2}
\end{figure}

\begin{figure}[h]
\includegraphics[scale=0.90]{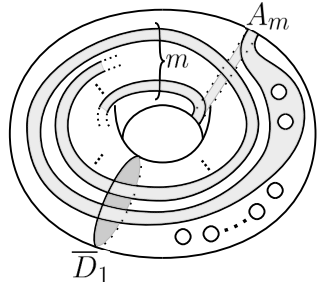}
\caption{Annulus $A_m$ and the disk $\overline{D}_1$ in the solid torus $V$. We express the copies of $\overline{D}_2\sqcup \dots \sqcup \overline{D}_g$ by the holes on $\partial V$.}\label{solidtorus1}
\end{figure}

\section{Applications}\label{application}

In this section, we prove Theorem~\ref{thm_application}. 

\subsection{Proof of Theorem~\ref{thm_application}}\label{proof_application}
Take a symplectic basis $\{ a_1,\dots ,a_g, b_1,\dots , b_g\}$ for ${\rm H}_1(\Sigma _{g,1};\mathbb Z)$ as in Figure~\ref{handlebody_basis1}. The symplectic group is ${\rm Sp}(2g,\mathbb Z)=\{ X\in {\rm GL}(2g,\mathbb Z)\mid {}^t\!XJ_{2g}X=J_{2g}\}$, where $J_{2g}=\left(
    \begin{array}{cc}
      0 & I_g  \\
      -I_g & 0 
    \end{array}
  \right)$ and $I_g$ is the identity matrix of rank $g$. We define
\begin{eqnarray*}
 {\rm urSp}(2g) &:=&\left\{ \left(
    \begin{array}{cc}
      A & B  \\
      C & D 
    \end{array}
  \right) \in {\rm GL}(2g,\mathbb Z)\middle| C=0\right\}\cap {\rm Sp}(2g,\mathbb Z)\\
&=& \left\{ \left(
    \begin{array}{cc}
      A & B  \\
      0 & {}^t\!A^{-1} 
    \end{array}
  \right) \middle|
\begin{array}{l}
\text{$A$ is unimodular,} \\
\text{$A^{-1}B$ is symmetric}
\end{array}\right\}. 
\end{eqnarray*}
The notation ${\rm urSp}(2g)$ was introduced by Hirose~\cite{Hirose}. The last equation and the next lemma is obtained from an argument in Section~2 of \cite{Birman1}. Recall the homomorphism $\Psi :\mathcal{M}_{g,1}\rightarrow {\rm Sp}(2g,\mathbb Z)$ induced by the action of $\mathcal{M}_{g,1}$ on ${\rm H}_1(\Sigma _{g,1};\mathbb Z)$.
\begin{lem}[\cite{Birman1}]\label{birman}
$\Psi (\mathcal{H}_{g,1})={\rm urSp}(2g)$.
\end{lem}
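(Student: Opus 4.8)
The plan is to prove the two inclusions $\Psi(\mathcal{H}_{g,1}) \subseteq \mathrm{urSp}(2g)$ and $\mathrm{urSp}(2g) \subseteq \Psi(\mathcal{H}_{g,1})$ separately. For the first inclusion, I would use that an element of $\mathcal{H}_{g,1}$ extends to a diffeomorphism of $H_g$, and that such a diffeomorphism induces a commutative square relating the action on $\mathrm{H}_1(\Sigma_{g,1};\mathbb{Z})$ to the action on $\mathrm{H}_1(H_g;\mathbb{Z})$ through the inclusion-induced map $\iota_*\colon \mathrm{H}_1(\Sigma_{g,1};\mathbb{Z}) \to \mathrm{H}_1(H_g;\mathbb{Z})$. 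With the basis of Figure~\ref{handlebody_basis1}, the meridians $a_1,\dots,a_g$ bound disks in $H_g$ and hence span the Lagrangian $L := \ker \iota_*$, while $b_1,\dots,b_g$ project to a basis of $\mathrm{H}_1(H_g;\mathbb{Z})$. Any $\varphi \in \mathcal{H}_{g,1}$ preserves $L$, so writing $\Psi(\varphi) = \begin{pmatrix} A & B \\ C & D\end{pmatrix}$ in these coordinates forces $C = 0$; the symplectic condition then yields $D = {}^t\!A^{-1}$ and $A^{-1}B$ symmetric, exactly as in the displayed description of $\mathrm{urSp}(2g)$.

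For the reverse inclusion I would exploit the semidirect product structure $\mathrm{urSp}(2g) \cong \mathrm{Sym}(g,\mathbb{Z}) \rtimes \mathrm{GL}(g,\mathbb{Z})$, where $\mathrm{Sym}(g,\mathbb{Z})$ is the additive group of symmetric integer matrices embedded as the unipotent block matrices $\begin{pmatrix} I_g & B \\ 0 & I_g\end{pmatrix}$ with $B$ symmetric, and $\mathrm{GL}(g,\mathbb{Z})$ is embedded as the block-diagonal matrices $\begin{pmatrix} A & 0 \\ 0 & {}^t\!A^{-1}\end{pmatrix}$. It then suffices to realize each factor. The $\mathrm{GL}(g,\mathbb{Z})$ factor comes for free from the surjectivity of $\eta\colon \mathcal{H}_{g,1} \to \mathrm{Aut}F_g$ proved by Griffiths: the induced action of $\varphi \in \mathcal{H}_{g,1}$ on $\mathrm{H}_1(H_g;\mathbb{Z})$ is the lower-right block $D$ of $\Psi(\varphi)$, so composing $\eta$ with the abelianization $\mathrm{Aut}F_g \to \mathrm{GL}(g,\mathbb{Z})$ shows that every $D \in \mathrm{GL}(g,\mathbb{Z})$, hence every $A = {}^t\!D^{-1}$, is attained up to a symmetric unipotent factor.

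To realize the unipotent radical I would use twists along meridian curves. A disk twist $t_{a_i}$ extends over $H_g$ because $a_i$ bounds a meridian disk, and by the transvection formula $t_c(x) = x + \langle x, [c]\rangle [c]$ its homological action is $\begin{pmatrix} I_g & E_{ii} \\ 0 & I_g\end{pmatrix}$ up to sign, producing the diagonal symmetric matrices. For the off-diagonal generator $E_{ij} + E_{ji}$ I would choose a simple closed curve $c$ bounding a meridian disk with $[c] = a_i + a_j$; its transvection contributes the required symmetric matrix, and combined with the $t_{a_i}$ these generate all of $\mathrm{Sym}(g,\mathbb{Z})$. Together with the Levi factor this surjects onto $\mathrm{urSp}(2g)$, completing the proof.

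I expect the main obstacle to be the reverse inclusion, specifically the explicit realization of the unipotent radical: one must produce honest handlebody diffeomorphisms, namely twists along meridians homologous to $a_i + a_j$ that genuinely extend over $H_g$, and verify that their homological action is precisely the claimed elementary symmetric matrix while tracking the signs in the transvection formula. By contrast, the inclusion $\subseteq$ is pure linear algebra once Lagrangian preservation is observed, and the Levi part of the surjection is a direct consequence of Griffiths' theorem.
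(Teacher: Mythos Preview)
The paper does not supply its own proof of this lemma: it simply states that the result (and the displayed description of $\mathrm{urSp}(2g)$) ``is obtained from an argument in Section~2 of \cite{Birman1}'' and moves on. So there is nothing in the paper to compare your argument against directly.

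That said, your outline is correct and is essentially the standard argument. The inclusion $\Psi(\mathcal{H}_{g,1})\subseteq\mathrm{urSp}(2g)$ is exactly the Lagrangian-preservation observation you give, and indeed with the paper's conventions the meridian curves $D_i$ satisfy $[D_i]=a_i$ (one checks $\Psi(t_{D_1})=Y_{1,1}$, consistent with your transvection computation). For the reverse inclusion, your decomposition into Levi and unipotent parts matches how the paper itself treats $\mathrm{urSp}(2g)$ later in Section~\ref{proof_ursp}: the unipotent radical is the group $\mathcal{S}_g$ there, and the paper explicitly records $\Psi(t_{D_1})=Y_{1,1}$, $\Psi(\alpha)=X_{1,2}$, $\Psi(\omega)=Z_1$ as lifts of the relevant generators. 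Your use of Griffiths' surjectivity of $\eta$ to hit the $\mathrm{GL}(g,\mathbb Z)$ block is a clean shortcut; alternatively one can lift Nielsen generators of $\mathrm{Aut}F_g$ by explicit handle slides, which is closer in spirit to Birman's original treatment but amounts to the same thing. The one place to be careful is the off-diagonal unipotent generator: the twist along a meridian with class $a_i+a_j$ gives the symmetric matrix $S_{i,i}+S_{j,j}+S_{i,j}$ rather than $S_{i,j}$ alone, so you must cancel the diagonal pieces with $t_{D_i}^{-1}t_{D_j}^{-1}$, exactly as you indicate.
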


\begin{figure}[h]
\includegraphics[scale=0.80]{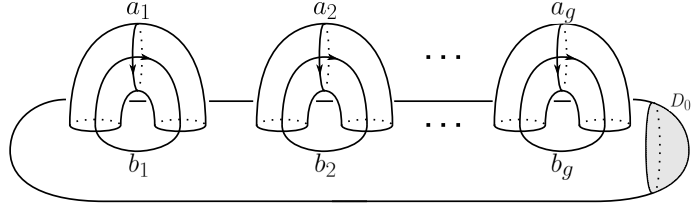}
\caption{Basis for the first homology group of $\Sigma _{g,1}$.}\label{handlebody_basis1}
\end{figure}

We review the next well-known lemma. 
\begin{lem}\label{gen_ker_composition}
Let $G$, $H$ and $Q$ be groups and let $\varphi :G\rightarrow H$ and $\psi :H\rightarrow Q$ be homomorphisms. We take a generating set $X$ for ${\rm ker}\psi |_{\varphi (G)}\subset H$ and a lift $\widetilde{X}\subset G$ of $X$ with respect to $\varphi $. Then ${\rm ker}\psi \circ \varphi $ is generated by ${\rm ker}\varphi $ and $\widetilde{X}$.
\end{lem}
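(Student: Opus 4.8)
The plan is to prove the two inclusions between $\ker(\psi\circ\varphi)$ and the subgroup $K := \langle \ker\varphi \cup \widetilde{X}\rangle$ of $G$ by a direct diagram chase. First I would record the key identity
\[
\ker(\psi\circ\varphi)=\varphi^{-1}\bigl(\ker(\psi|_{\varphi(G)})\bigr),
\]
which holds because $\psi(\varphi(g))=1$ is equivalent to $\varphi(g)\in\ker\psi$, and since $\varphi(g)\in\varphi(G)$ automatically, this is the same as $\varphi(g)\in\ker\psi\cap\varphi(G)=\ker(\psi|_{\varphi(G)})$. This identity makes both inclusions transparent.

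For the inclusion $K\subseteq\ker(\psi\circ\varphi)$, I would check the two types of generators separately. Any element of $\ker\varphi$ lies in $\ker(\psi\circ\varphi)$ trivially. For $\widetilde{x}\in\widetilde{X}$, its image $\varphi(\widetilde{x})=x$ is the corresponding element of $X\subseteq\ker(\psi|_{\varphi(G)})$, so $\psi(\varphi(\widetilde{x}))=\psi(x)=1$; hence $\widetilde{X}\subseteq\ker(\psi\circ\varphi)$ as well. Since $\ker(\psi\circ\varphi)$ is a subgroup, it contains all of $K$.

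For the reverse inclusion $\ker(\psi\circ\varphi)\subseteq K$, I would take an arbitrary $g\in\ker(\psi\circ\varphi)$. By the identity above, $\varphi(g)\in\ker(\psi|_{\varphi(G)})$, so $\varphi(g)$ can be written as a word $x_1^{\epsilon_1}\cdots x_n^{\epsilon_n}$ with $x_i\in X$ and $\epsilon_i=\pm1$. Replacing each $x_i$ by its chosen lift $\widetilde{x}_i\in\widetilde{X}$, I set $h:=\widetilde{x}_1^{\epsilon_1}\cdots\widetilde{x}_n^{\epsilon_n}\in K$. Then $\varphi(h)=x_1^{\epsilon_1}\cdots x_n^{\epsilon_n}=\varphi(g)$, so $\varphi(gh^{-1})=1$, i.e. $gh^{-1}\in\ker\varphi\subseteq K$. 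Therefore $g=(gh^{-1})h\in K$, completing the inclusion and hence the proof.

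I do not expect any genuine obstacle here: the statement is formal and the argument is a standard lifting-of-words chase. The only point requiring mild care is that $X$ need not be a subgroup, so I must express $\varphi(g)$ as a \emph{word} in $X^{\pm1}$ and lift the word termwise, rather than lifting a single element; and I should keep the distinction between $\ker\psi$ and its restriction $\ker(\psi|_{\varphi(G)})$ straight, which is exactly what the identity in the first paragraph isolates.
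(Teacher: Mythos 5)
Your proof is correct, and it is the standard argument this lemma implicitly relies on: the paper states it as a well-known fact and gives no proof at all, so there is nothing to diverge from. Both inclusions are handled cleanly, and you correctly flag the one point of care (lifting a word in $X^{\pm 1}$ termwise rather than lifting a single element), so nothing needs to be added.
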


Let $\Phi _d:{\rm Sp}(2g,\mathbb Z)\rightarrow {\rm Sp}(2g,\mathbb Z_d)$ be the homomorphism induced by the natural projection $\mathbb Z\rightarrow \mathbb Z_d$ for $d\geq 2$. Then we define 
\[
{\rm urSp}(2g)[d]:={\rm ker}\Phi _d|_{{\rm urSp}(2g)}\subset {\rm urSp}(2g).
\]
For distinct $1\leq i,j\leq g$, denote by $\mathcal{E}_{i,j}$ the $(g\times g)$-matrix whose $(i,j)$-entry is $1$ and the other entries are $0$, by $S_{i,j}$ the $(g\times g)$-matrix whose $(i,j)$-entry and $(j,i)$-entry are $1$ and the other entries are $0$ and by $S_{i,i}$ the $(g\times g)$-matrix whose $(i,i)$-entry is $1$ and the other entries are $0$. Then we define $E_{i,j}:=I_g+\mathcal{E}_{i,j}$, $F_i:=I_g-2S_{i,i}$ for distinct $1\leq i,j\leq g$ and 
\begin{eqnarray*}
X_{i,j}&:=&\left(
    \begin{array}{cc}
      E_{i,j} & 0  \\
      0  & -E_{j,i} 
    \end{array}
  \right) \text{ for distinct }1\leq i,j\leq g,\\
Y_{i,j}&:=& \left(
    \begin{array}{cc}
      I_g & S_{i,j}  \\
      0  & I_g 
    \end{array}
  \right) \text{ for }1\leq i,j\leq g,\\
Z_i&:=&\left(
    \begin{array}{cc}
      F_i & 0  \\
      0  & F_i 
    \end{array}
  \right) \text{ for }1\leq i\leq g.\\
\end{eqnarray*} 
Note that $X_{i,j}$ and $Y_{i,j}$ are elements of ${\rm urSp}(2g)$, $Z_i$ is an element of ${\rm urSp}(2g)[2]$, and $X_{i,j}^d$ and $Y_{i,i}^d$ are elements of ${\rm urSp}(2g)[d]$ for $d\geq 2$. Then we have the following proposition.
\begin{prop}\label{ursp}
For $g\geq 1$, ${\rm urSp}(2g)[2]$ is normally generated in ${\rm urSp}(2g)$ by $Y_{1,1}^2$ and $Z_1$.

For $g\geq 3$ and $d\geq 3$, ${\rm urSp}(2g)[d]$ is normally generated in ${\rm urSp}(2g)$ by $X_{1,2}^d$ and $Y_{1,1}^d$.
\end{prop}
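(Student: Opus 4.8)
The plan is to project onto the ``linear part'' of ${\rm urSp}(2g)$ and split Proposition~\ref{ursp} into a normal-generation statement for a congruence subgroup of ${\rm GL}(g,\mathbb Z)$ and a spanning statement for symmetric matrices. Let $\pi\colon {\rm urSp}(2g)\to {\rm GL}(g,\mathbb Z)$ be the surjection sending $\left(\begin{smallmatrix} A & B \\ 0 & {}^t\!A^{-1}\end{smallmatrix}\right)$ to $A$; it is onto because $\left(\begin{smallmatrix} A & 0 \\ 0 & {}^t\!A^{-1}\end{smallmatrix}\right)\in {\rm urSp}(2g)$ for every unimodular $A$, and its kernel is the abelian group $K=\{\left(\begin{smallmatrix} I_g & B \\ 0 & I_g\end{smallmatrix}\right)\mid {}^t\!B=B\}$. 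Since $\Phi_d$ reduces all entries modulo $d$, one checks that $\pi({\rm urSp}(2g)[d])={\rm GL}(g,\mathbb Z)[d]$, the principal congruence subgroup $\ker({\rm GL}(g,\mathbb Z)\to {\rm GL}(g,\mathbb Z_d))$, and that $K\cap {\rm urSp}(2g)[d]=K_d:=\{\left(\begin{smallmatrix} I_g & B \\ 0 & I_g\end{smallmatrix}\right)\mid {}^t\!B=B,\ B\equiv 0 \pmod d\}$. Writing $N$ for the normal closure in ${\rm urSp}(2g)$ of the proposed generators, the inclusion $N\subseteq {\rm urSp}(2g)[d]$ is immediate, and for the reverse inclusion it suffices, by a routine extension argument, to prove \textbf{(a)} $\pi(N)={\rm GL}(g,\mathbb Z)[d]$ and \textbf{(b)} $K_d\subseteq N$: given $M\in {\rm urSp}(2g)[d]$, one multiplies $M$ by an element of $N$ with the same $\pi$-image to land in $K_d\subseteq N$.

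For \textbf{(b)} I would conjugate the generator $Y_{1,1}^d=\left(\begin{smallmatrix} I_g & dS_{1,1} \\ 0 & I_g\end{smallmatrix}\right)$ by $\left(\begin{smallmatrix} A & 0 \\ 0 & {}^t\!A^{-1}\end{smallmatrix}\right)\in {\rm urSp}(2g)$, obtaining $\left(\begin{smallmatrix} I_g & d\,(Ae_1)\,{}^t\!(Ae_1) \\ 0 & I_g\end{smallmatrix}\right)$ since $S_{1,1}=e_1\,{}^t\!e_1$. As $A$ ranges over ${\rm GL}(g,\mathbb Z)$, the vector $Ae_1$ ranges over all primitive vectors, so $N$ contains $\left(\begin{smallmatrix} I_g & d\,v\,{}^t\!v \\ 0 & I_g\end{smallmatrix}\right)$ for every primitive $v$. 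Taking $v=e_i$ and $v=e_i+e_j$ shows that the rank-one matrices $v\,{}^t\!v$ additively generate all integral symmetric matrices; multiplying by $d$ and recalling that the group law on $K$ is addition of upper-right blocks yields $K_d\subseteq N$.

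For \textbf{(a)}, note that $\pi(N)$ is the normal closure in ${\rm GL}(g,\mathbb Z)$ of the $\pi$-images of the generators. For $d\geq 3$ one has $\pi(X_{1,2}^d)=E_{1,2}^d=I_g+d\mathcal{E}_{1,2}$ and $\pi(Y_{1,1}^d)=I_g$, and moreover ${\rm GL}(g,\mathbb Z)[d]={\rm SL}(g,\mathbb Z)[d]$ because a matrix congruent to $I_g$ modulo $d\geq 3$ has determinant $\pm 1\equiv 1\pmod d$, forcing determinant $1$. For $d=2$ one has $\pi(Y_{1,1}^2)=I_g$ and $\pi(Z_1)=F_1$; here $\det F_1=-1$ and $-I_g=F_1F_2\cdots F_g\in\pi(N)$ (each $F_i$ being a permutation-conjugate of $F_1$), while the identity $\left(E_{i,j}F_iE_{i,j}^{-1}\right)F_i=I_g+2\mathcal{E}_{i,j}$ shows that $\pi(N)$ contains every $I_g+2\mathcal{E}_{i,j}$. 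In both cases \textbf{(a)} reduces to the assertion that the elementary matrices $I_g+d\mathcal{E}_{i,j}$ normally generate ${\rm SL}(g,\mathbb Z)[d]$ in ${\rm SL}(g,\mathbb Z)$, the determinant $-1$ in the case $d=2$ being supplied by $F_1$.

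This last reduction is the main obstacle, and it is precisely where the genus hypotheses enter. For $g\geq 3$ it is the content of the solution of the congruence subgroup problem for ${\rm SL}(g,\mathbb Z)$ (Bass--Milnor--Serre, Mennicke): the relative elementary subgroup coincides with the full principal congruence subgroup, and all $I_g+d\mathcal{E}_{i,j}$ are conjugate to $I_g+d\mathcal{E}_{1,2}$ by signed permutation matrices, so a single elementary matrix already normally generates ${\rm SL}(g,\mathbb Z)[d]$. This statement fails for ${\rm SL}(2,\mathbb Z)$, which is why the case $d\geq 3$ is claimed only for $g\geq 3$. The remaining low-genus case $d=2$, $g\in\{1,2\}$ I would treat directly, using ${\rm GL}(1,\mathbb Z)[2]=\{\pm 1\}$ and the explicit description of ${\rm SL}(2,\mathbb Z)[2]=\Gamma(2)$ as generated by $I_2+2\mathcal{E}_{1,2}$, $I_2+2\mathcal{E}_{2,1}$ and $-I_2$, all of which lie in $\pi(N)$. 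Granting \textbf{(a)} and \textbf{(b)}, the extension argument of the first paragraph gives $N={\rm urSp}(2g)[d]$ and finishes the proof.
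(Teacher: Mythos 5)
Your proposal is correct, and its skeleton is the same as the paper's: both split ${\rm urSp}(2g)[d]$ along the projection $\pi$ to ${\rm GL}(g,\mathbb Z)$, handle the symmetric kernel by conjugating $Y_{1,1}^d$ by block-diagonal elements, and handle the linear part by a congruence-subgroup normal generation statement. Your treatment of the kernel (rank-one blocks $d\,v\,{}^t\!v$ for primitive $v$, plus additivity of the abelian group $\mathcal{S}_g$) is a cleaner phrasing of the paper's Lemma~\ref{normalgen_sg} and Corollary~\ref{normalgen_leveld_sg}; the paper's identity $Y_{1,1}^{-1}\cdot X_{2,1}Y_{1,1}X_{2,1}^{-1}\cdot Y_{2,2}^{-1}=Y_{1,2}$ is exactly your $v=e_1+e_2$ computation. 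The one genuine divergence is the level-2 linear part. The paper quotes the fact (McCarthy--Pinkall~\cite{Mccarthy-Pinkall}) that $\Gamma_2(g)$ is generated, as a group, by the $E_{i,j}^2$ and the $F_i$ for every $g\geq 1$, and then needs only the identity $E_{1,2}F_1E_{1,2}^{-1}\cdot F_1=E_{1,2}^2$ (which you also use) to get normal generation by $F_1$ alone; this makes the $d=2$ case uniform in $g\geq 1$ with no appeal to Bass--Milnor--Serre. You instead invoke the congruence subgroup property at level $2$ for $g\geq 3$ and special-case $g=1,2$ by hand. This works, but the citation needs care: the version of Bass--Milnor--Serre quoted in the paper (Theorem~\ref{Bass-Milnor-Serre}) assumes $d\geq 3$, partly because for $d=2$ the equality $\Gamma_d(g)={\rm SL}(g,\mathbb Z)[d]$ fails; what you actually need is that ${\rm SL}(g,\mathbb Z)[2]$ is the normal closure in ${\rm SL}(g,\mathbb Z)$ of the $E_{i,j}^2$ for $g\geq 3$, which is true (the congruence kernel of ${\rm SL}_g(\mathbb Z)$ is trivial, so the relative elementary subgroup equals the principal congruence subgroup at every level), but it should be cited in that form rather than as the $d\geq 3$ statement. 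The paper's route avoids this subtlety entirely, at the cost of quoting the explicit generating set for $\Gamma_2(g)$.
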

We prove Proposition~\ref{ursp} in Section~\ref{proof_ursp}.
\begin{proof}[Proof of Theorem~\ref{thm_application}]
By the definition of $\mathcal{H}_{g,1}[d]$, $\mathcal{H}_{g,1}[d]$ is the kernel of the composition of $\Psi :\mathcal{H}_{g,1}\rightarrow {\rm Sp}(2g,\mathbb Z)$ and $\Phi _d:{\rm Sp}(2g,\mathbb Z)\rightarrow {\rm Sp}(2g,\mathbb Z_d)$. We apply Lemma~\ref{gen_ker_composition} to these homomorphisms. Since $\Psi (\mathcal{H}_{g,1})={\rm urSp}(2g)$, by Lemma~\ref{birman}, $\mathcal{H}_{g,1}[d]$ is generated by ${\rm ker}\Psi =\mathcal{IH}_{g,1}$ and a lift of a generating set for ${\rm ker}\Phi _d|_{{\rm urSp}(2g)}={\rm urSp}(2g)[d]$. We can check $\Psi (\alpha )=X_{1,2}$, $\Psi (t_{D_1})=Y_{1,1}$, $\Psi (\omega )=Z_1$ and conjugations of $X_{1,2}$, $Y_{1,1}$ and $Z_1$ in ${\rm urSp}(2g)$ lift conjugations of $\alpha $, $t_{D_1}$ and $\omega $ in $\mathcal{H}_{g,1}$ with respect to $\Psi $. Therefore, by Proposition~\ref{ursp}, $\mathcal{H}_{g,1}[2]$ is normally generated in $\mathcal{H}_{g,1}$ by $\omega $, $t_{D_1}^2$ and a genus-1 HBP-map, and $\mathcal{H}_{g,1}[d]$ is normally generated in $\mathcal{H}_{g,1}$ by $\alpha ^d$, $t_{D_1}^d$ and a genus-1 HBP-map for $g\geq 3$ and $d\geq 3$. We have completed the proof of Theorem~\ref{thm_application}. 
\end{proof}

\subsection{A normal generating set for $\boldsymbol{{\rm urSp}(2g)[d]}$}\label{proof_ursp}
In this section, we give a proof of Proposition~\ref{ursp}. The {\it level $d$ principal congruence subgroup $\Gamma _d(g)$ (resp. ${\rm SL}(g,\mathbb Z)[d]$) of ${\rm GL}(g,\mathbb Z)$ (resp. ${\rm SL}(g,\mathbb Z)$)} is the kernel of the natural homomorphism ${\rm GL}(g,\mathbb Z)\rightarrow {\rm GL}(g,\mathbb Z_d)$ (resp. ${\rm SL}(g,\mathbb Z)\rightarrow {\rm SL}(g,\mathbb Z_d)$). For $g\geq 1$, $\Gamma _2(g)$ is generated by $E_{i,j}^2$ and $F_i$ for distinct $1\leq i,j\leq g$ (see for instance \cite{Mccarthy-Pinkall}). In particular, we have the following lemma.
\begin{lem}\label{normalgen_level2_gl}
For $g\geq 1$, $\Gamma _2(g)$ is normally generated in $GL(g,\mathbb Z)$ by $F_1$.
\end{lem}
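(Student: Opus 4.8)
The plan is to reduce everything to the generating set supplied just before the statement, namely that $\Gamma_2(g)$ is generated by the matrices $E_{i,j}^2$ and $F_i$ for distinct $1\leq i,j\leq g$, and then to place each such generator in the normal closure of $F_1$. Write $N$ for the normal closure of $\{F_1\}$ in $GL(g,\mathbb{Z})$. Since $F_1$ reduces to the identity modulo $2$ we have $F_1\in\Gamma_2(g)$, and $\Gamma_2(g)$ is normal in $GL(g,\mathbb{Z})$ because it is a kernel; hence $N\subseteq\Gamma_2(g)$ for free. The entire content is the reverse inclusion, which by the given generating set amounts to showing $F_i\in N$ and $E_{i,j}^2\in N$ for all relevant indices.

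First I would dispose of the $F_i$. The permutation matrix $P_{1i}$ transposing the coordinates $1$ and $i$ lies in $GL(g,\mathbb{Z})$ and conjugates $F_1$ to $F_i$, so each $F_i\in N$ and, more importantly, every conjugate of every $F_i$ lies in $N$.

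The main point is the expression of $E_{i,j}^2$ as a commutator of $E_{i,j}$ with $F_j$. Concretely I would verify the identity
\[
E_{i,j}^2=\bigl(E_{i,j}F_jE_{i,j}^{-1}\bigr)F_j^{-1},
\]
that is, $E_{i,j}^2=[E_{i,j},F_j]$. Using $E_{i,j}^{-1}=I_g-\mathcal{E}_{i,j}$ together with the elementary products $S_{j,j}\mathcal{E}_{i,j}=0$ and $\mathcal{E}_{i,j}S_{j,j}=\mathcal{E}_{i,j}$ (valid for $i\neq j$), one computes $E_{i,j}F_jE_{i,j}^{-1}=I_g-2S_{j,j}-2\mathcal{E}_{i,j}$, and multiplying by $F_j^{-1}=I_g-2S_{j,j}$ collapses the diagonal terms and leaves $I_g+2\mathcal{E}_{i,j}=E_{i,j}^2$. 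The reason for writing this as a commutator is structural rather than computational: $E_{i,j}F_jE_{i,j}^{-1}$ is a conjugate of $F_j$, and $F_j^{-1}=F_j$ since $F_j^2=I_g$, so $E_{i,j}^2$ is a product of two elements of $N$ and therefore lies in $N$.

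Combining the two steps, $N$ contains the full generating set of $\Gamma_2(g)$, so $\Gamma_2(g)\subseteq N$ and hence $N=\Gamma_2(g)$. The only step that requires any genuine idea is recognizing that $E_{i,j}^2$ is exactly the commutator $[E_{i,j},F_j]$, so that its being a product of conjugates of the diagonal reflections $F_j^{\pm1}$ is automatic; everything else is bookkeeping with the supplied generators and with normality of the congruence subgroup. The case $g=1$ is degenerate: there are no distinct indices, $\Gamma_2(1)=\langle F_1\rangle$, and the statement holds trivially.
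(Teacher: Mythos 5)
Your proposal is correct and follows essentially the same route as the paper: reduce to the generating set $\{E_{i,j}^2, F_i\}$ of $\Gamma_2(g)$, conjugate the $F_i$ to $F_1$ by permutation matrices, and express $E_{i,j}^2$ as a commutator of $E_{i,j}$ with a diagonal reflection (the paper writes $E_{1,2}^2=(E_{1,2}F_1E_{1,2}^{-1})F_1$, using $F_1$ instead of your $F_j$ and reducing to the indices $(1,2)$ via its Lemma on conjugacy of the $E_{i,j}$, but this is the same identity). The only cosmetic difference is that you verify the commutator identity for arbitrary indices directly rather than first normalizing to $E_{1,2}$.
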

To prove Lemma~\ref{normalgen_level2_gl}, we prepare the following easy lemma.
\begin{lem}\label{conj_eij}
For distinct $1\leq i,j\leq g$, each $E_{i,j}$ is conjugate to $E_{1,2}$ in $GL(g,\mathbb Z)$.
\end{lem}
\begin{proof}[proof of Lemma~\ref{normalgen_level2_gl}]
Since
\begin{eqnarray*}
\left(
\begin{array}{cc}
      0 & 1  \\
      1 & 0 
\end{array}
\right) 
\left(
\begin{array}{cc}
      1 & 0  \\
      0 & -1 
\end{array}
\right)
\left(
\begin{array}{cc}
      0 & 1  \\
      1 & 0 
\end{array}
\right) 
=
\left(
\begin{array}{cc}
      -1 & 0  \\
      0 & 1 
\end{array}
\right) ,   
\end{eqnarray*} 
each $F_i$ is conjugate to $F_1$ in ${\rm GL}(g,\mathbb Z)$. By Lemma~\ref{conj_eij}, it is enough for the proof of Lemma~\ref{normalgen_level2_gl} to show that $E_{1,2}^2$ is a product of conjugations of $F_1$ in ${\rm GL}(g,\mathbb Z)$. Since 
\begin{eqnarray*}
\left(
\begin{array}{cc}
      1 & 1  \\
      0 & 1 
\end{array}
\right) 
\left(
\begin{array}{cc}
      -1 & 0  \\
      0 & 1 
\end{array}
\right)
\left(
\begin{array}{cc}
      1 & -1  \\
      0 & 1 
\end{array}
\right) 
\cdot \left(
\begin{array}{cc}
      -1 & 0  \\
      0 & 1 
\end{array}
\right)
=
\left(
\begin{array}{cc}
      1 & 2  \\
      0 & 1 
\end{array}
\right) , 
\end{eqnarray*} 
we have $E_{1,2}F_1E_{1,2}^{-1}\cdot F_1=E_{1,2}^2$. Therefore we get Lemma~\ref{normalgen_level2_gl}.
\end{proof}

We note that $\Gamma _d(g)={\rm SL}(g,\mathbb Z)[d]$ for $d\geq 3$. Bass-Milnor-Serre~\cite{Bass-Milnor-Serre} gave a generating set for ${\rm SL}(g,\mathbb Z)[d]$ as follows.
\begin{thm}[\cite{Bass-Milnor-Serre}, see also \cite{Putman3}]\label{Bass-Milnor-Serre}
For $g\geq 3$ and $d\geq 3$, ${\rm SL}(g,\mathbb Z)[d]=\Gamma _d(g)$ is normally generated in ${\rm SL}(g,\mathbb Z)$ by $E_{i,j}^d$ for distinct $1\leq i,j\leq g$.
\end{thm}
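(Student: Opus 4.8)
The plan is to prove the two inclusions separately. Write $N$ for the normal closure in ${\rm SL}(g,\mathbb Z)$ of $\{ E_{i,j}^d : 1\leq i\neq j\leq g\}$, and let $E_d(g)\leq \Gamma _d(g)$ be the subgroup generated by the $E_{i,j}^d$. Since $E_{i,j}^d=I_g+d\mathcal{E}_{i,j}\equiv I_g \pmod d$ lies in the normal subgroup $\Gamma _d(g)={\rm ker}({\rm SL}(g,\mathbb Z)\to {\rm SL}(g,\mathbb Z_d))$, the inclusion $N\subseteq \Gamma _d(g)$ is immediate. For the reverse inclusion I would prove that in fact $E_d(g)=\Gamma _d(g)$; combined with the evident chain $E_d(g)\subseteq N\subseteq \Gamma _d(g)$ this forces $N=\Gamma _d(g)$ and gives the theorem. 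Thus everything reduces to the assertion that, for $g\geq 3$, every level-$d$ matrix is a product of the elementary matrices $I_g+dm\mathcal{E}_{i,j}=(E_{i,j}^d)^m$, all of which lie in $E_d(g)$.

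To prove $E_d(g)=\Gamma _d(g)$ I would argue by induction on $g$ using row and column reduction, the engine being the following Main Lemma: \emph{for $g\geq 3$, the group $E_d(g)$ acts transitively by left multiplication on the set of unimodular column vectors $v\in \mathbb Z^g$ with $v\equiv e_1 \pmod d$}, where $e_1$ is the first standard basis vector. Granting this, take $M\in \Gamma _d(g)$; its first column is such a vector, so there is $P\in E_d(g)$ with $PM$ having first column $e_1$. Because $PM\in \Gamma _d(g)$, its first row is $\equiv (1,0,\dots ,0) \pmod d$, and I can clear it on the right by matrices $I_g+dm\mathcal{E}_{1,j}$ $(j\geq 2)$, obtaining $PMQ=\left( \begin{smallmatrix} 1 & 0 \\ 0 & M' \end{smallmatrix} \right)$ with $Q\in E_d(g)$ and $M'\in \Gamma _d(g-1)$ the lower-right block. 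One then reduces $M'$ by the same device. The only delicate point is the passage through the bottom $2\times 2$ block, where the rank-two analogue of the Main Lemma is false; here one uses the extra coordinates still available (present exactly because $g\geq 3$) to express an embedded element of $\Gamma _d(2)$ as a product of $d$-elementary matrices of the ambient $E_d(g)$.

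The hard part is the Main Lemma, i.e.\ reducing a unimodular vector $v\equiv e_1 \pmod d$ to $e_1$ using only $d$-elementary operations. The strategy is two-step. First, using operations $v_i\mapsto v_i+dm\,v_j$ together with a congruence argument (and Dirichlet's theorem on primes in arithmetic progressions), replace $v$ by a vector in which the middle coordinates $v_2,\dots ,v_{g-1}$ are cleared and the last coordinate $v_g$ is coprime to $v_1$, the congruence class modulo $d$ being preserved throughout. Second, reduce the surviving pair $(v_1,v_g)$ with $\gcd (v_1,v_g)=1$ to $(1,0)$; this is precisely the computation that the relevant Mennicke symbol is trivial, and it is doable only with the aid of a third coordinate. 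This is the genuine arithmetic obstacle and the reason the statement requires $g\geq 3$: the same reduction fails for $g=2$, reflecting the nontriviality of Mennicke symbols for ${\rm SL}(2,\mathbb Z)$ and ultimately the failure of the congruence subgroup property there. I would therefore isolate the vector reduction as a self-contained number-theoretic lemma, following Bass--Milnor--Serre~\cite{Bass-Milnor-Serre} or the streamlined treatment in~\cite{Putman3}, and feed it into the inductive scheme above.
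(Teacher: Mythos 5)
The paper itself does not prove this theorem; it quotes it from Bass--Milnor--Serre and from Putman's notes, so your proposal can only be measured against those sources and against correctness.

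There is a fatal gap, and it is exactly at the point where you discard the normal closure. You reduce everything to the claim that $E_d(g)$, the subgroup generated by the matrices $E_{i,j}^d$ \emph{alone} (no conjugates), equals $\Gamma_d(g)$, and your Main Lemma asserts transitivity of this unnormalized group on unimodular vectors $v\equiv e_1 \pmod d$. Both claims are false for every $d\geq 2$, so the route cannot work: the passage to the normal closure is not bookkeeping but the actual content of the theorem. To see this, consider the map $\phi:\Gamma_d(g)\rightarrow M_g(\mathbb Z_d)$ (target an additive group) defined by $\phi(I_g+dA)=A \bmod d$. Since $(I_g+dA)(I_g+dB)=I_g+d(A+B)+d^2AB$, the map $\phi$ is a group homomorphism. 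Each generator $(E_{i,j}^d)^{\pm 1}=I_g\pm d\mathcal{E}_{i,j}$ maps to $\pm\mathcal{E}_{i,j}$, so every element of $\phi(E_d(g))$ has zero diagonal. Now let $M_0\in\Gamma_d(g)$ be the matrix agreeing with $I_g$ outside its lower-right $2\times 2$ block, which is
\[
\begin{pmatrix} 1+d & -d \\ d & 1-d \end{pmatrix}
\qquad (\det = (1-d^2)+d^2=1).
\]
Then $\phi(M_0)$ has diagonal entries $1,-1\not\equiv 0 \pmod d$, so $M_0\notin E_d(g)$; hence $E_d(g)\subsetneq\Gamma_d(g)$. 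Moreover $M_0$ is precisely an embedded element of $\Gamma_d(2)$, so this same example refutes your final step, in which you claim every embedded element of $\Gamma_d(2)$ is a product of $d$-elementary matrices of the ambient $E_d(g)$. Your Main Lemma fails for the same reason: if $v\equiv e_1\pmod d$, then each generator either leaves $v_1$ unchanged or replaces it by $v_1+dm\,v_j$ with $j\geq 2$ and $v_j\equiv 0\pmod d$, so $v_1 \bmod d^2$ is an invariant of the $E_d(g)$-action on such vectors; the unimodular vector $(1+d,d,0,\dots,0)\equiv e_1\pmod d$ can therefore never be moved to $e_1$.

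What is salvageable is the skeleton: induction by clearing the first column and row, with the $\mathrm{SL}_2$ base case handled by a Mennicke-symbol computation that uses a third coordinate (and Dirichlet's theorem). That is indeed the shape of the argument in Putman's notes. But there, every step is carried out for the normal closure $N$: the transitivity lemma states that $N$ (not $E_d(g)$) acts transitively on unimodular vectors congruent to $e_1$ modulo $d$ --- conjugates $gE_{i,j}^{dm}g^{-1}$ are exactly what allow one to change $v_1$ modulo $d^2$ --- and the base case is that the embedded copy of ${\rm SL}(2,\mathbb Z)[d]$ lies in $N$, which is where the Mennicke symbol machinery (defined on the quotient $\Gamma_d(g)/N$) enters. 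So the fix is not local: you must replace $E_d(g)$ by $N$ from the outset, after which your outline essentially becomes the cited proof.
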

By Lemma~\ref{conj_eij} and Theorem~\ref{Bass-Milnor-Serre}, we have the following lemma.
\begin{lem}\label{normalgen_leveld_gl}
For $g\geq 3$ and $d\geq 3$, $\Gamma _d(g)$ is normally generated in ${\rm GL}(g,\mathbb Z)$ by $E_{1,2}^d$.
\end{lem}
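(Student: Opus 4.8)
The plan is to deduce Lemma~\ref{normalgen_leveld_gl} directly from Theorem~\ref{Bass-Milnor-Serre} and Lemma~\ref{conj_eij} in two short steps: first pass from the ambient group ${\rm SL}(g,\mathbb Z)$ to ${\rm GL}(g,\mathbb Z)$, and then collapse the generating set $\{ E_{i,j}^d\}$ to the single element $E_{1,2}^d$ using the conjugacy provided by Lemma~\ref{conj_eij}. Recall that for $d\geq 3$ we have already noted $\Gamma _d(g)={\rm SL}(g,\mathbb Z)[d]$, so Theorem~\ref{Bass-Milnor-Serre} tells us that $\Gamma _d(g)$ is the normal closure in ${\rm SL}(g,\mathbb Z)$ of the set $\{ E_{i,j}^d\mid 1\leq i,j\leq g,\ i\neq j\}$.

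For the first step I would argue that $\Gamma _d(g)$ is also the normal closure of this same set in the larger group ${\rm GL}(g,\mathbb Z)$. Each $E_{i,j}^d$ lies in $\Gamma _d(g)$, and $\Gamma _d(g)$ is a normal subgroup of ${\rm GL}(g,\mathbb Z)$; hence the normal closure of $\{ E_{i,j}^d\}$ in ${\rm GL}(g,\mathbb Z)$ is contained in $\Gamma _d(g)$. Conversely, since ${\rm SL}(g,\mathbb Z)\subset {\rm GL}(g,\mathbb Z)$, the normal closure taken in ${\rm GL}(g,\mathbb Z)$ contains the normal closure taken in ${\rm SL}(g,\mathbb Z)$, which is all of $\Gamma _d(g)$ by Theorem~\ref{Bass-Milnor-Serre}. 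The two inclusions give equality.

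For the second step I would invoke Lemma~\ref{conj_eij}: for each pair of distinct indices $i,j$ there is a matrix $P\in {\rm GL}(g,\mathbb Z)$ with $E_{i,j}=PE_{1,2}P^{-1}$, and raising to the $d$th power yields $E_{i,j}^d=PE_{1,2}^dP^{-1}$. Thus every generator $E_{i,j}^d$ is a ${\rm GL}(g,\mathbb Z)$-conjugate of $E_{1,2}^d$, so the normal closure of $E_{1,2}^d$ in ${\rm GL}(g,\mathbb Z)$ already contains all of the $E_{i,j}^d$ and therefore contains their normal closure $\Gamma _d(g)$; the reverse inclusion is immediate since $E_{1,2}^d\in \Gamma _d(g)$ and $\Gamma _d(g)$ is normal. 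Combining the two steps proves the lemma. I do not expect a genuine obstacle here: the only point requiring care is the change-of-ambient-group bookkeeping in the first step, namely checking that conjugating by all of ${\rm GL}(g,\mathbb Z)$ rather than only by ${\rm SL}(g,\mathbb Z)$ cannot enlarge the normal closure past $\Gamma _d(g)$, which holds precisely because $\Gamma _d(g)$ is normal in ${\rm GL}(g,\mathbb Z)$.
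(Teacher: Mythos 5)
Your proof is correct and follows exactly the route the paper intends: the paper states this lemma without proof, asserting it follows from Theorem~\ref{Bass-Milnor-Serre} and Lemma~\ref{conj_eij}, and your two steps (enlarging the ambient group from ${\rm SL}(g,\mathbb Z)$ to ${\rm GL}(g,\mathbb Z)$ using normality of $\Gamma_d(g)$, then collapsing the generators to $E_{1,2}^d$ via conjugacy) are precisely the bookkeeping the paper leaves to the reader.
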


We define the normal subgroup
\begin{eqnarray*}
 \mathcal{S}_g := \left\{ \left(
    \begin{array}{cc}
      I_g & B  \\
      0 & I_g 
    \end{array}
  \right) \middle|
\begin{array}{l}
\text{$B$ is symmetric}
\end{array}\right\}
\end{eqnarray*}
of {\rm urSp}(2g) and the kernel $\mathcal{S}_g[d]$ of the homomorphism $\Phi _d|_{\mathcal{S}_g}:\mathcal{S}_g\rightarrow {\rm Sp}(2g,\mathbb Z_d)$. Note that each $Y_{i,j}$ is an element of $\mathcal{S}_g$, each $Y_{i,j}^d$ is an element of $\mathcal{S}_g[d]$ and $\mathcal{S}_g$ is an abelian group since 
\[
\left(
\begin{array}{cc}
      I_g & B  \\
      0 & I_g 
\end{array}
\right) 
\left(
\begin{array}{cc}
      I_g & B^\prime  \\
      0 & I_g 
\end{array}
\right)
=
\left(
\begin{array}{cc}
      I_g & B+B^\prime  \\
      0 & I_g 
\end{array}
\right) .
\]
We have the following lemma.
\begin{lem}\label{normalgen_sg}
For $g\geq 1$, $\mathcal{S}_g$ is normally generated in ${\rm urSp}(2g)$ by $Y_{1,1}$.
\end{lem}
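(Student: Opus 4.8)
The plan is to exploit that $\mathcal{S}_g$ is abelian and normal in ${\rm urSp}(2g)$. Because $\mathcal{S}_g$ is normal and contains $Y_{1,1}$, the normal closure of $Y_{1,1}$ in ${\rm urSp}(2g)$ is automatically contained in $\mathcal{S}_g$; because $\mathcal{S}_g$ is abelian, this normal closure is exactly the subgroup generated by all ${\rm urSp}(2g)$-conjugates of $Y_{1,1}$. So it suffices to show that these conjugates generate $\mathcal{S}_g$. I would first identify $\mathcal{S}_g$ with the additive group ${\rm Sym}_g(\mathbb{Z})$ of symmetric integer matrices via $\left(\begin{smallmatrix} I_g & S \\ 0 & I_g\end{smallmatrix}\right)\leftrightarrow S$; under this identification $Y_{i,j}\leftrightarrow S_{i,j}$, and the $S_{i,j}$ with $1\leq i\leq j\leq g$ form a $\mathbb{Z}$-basis.

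The key computation is the conjugation action. For $P=\left(\begin{smallmatrix} A & B \\ 0 & {}^tA^{-1}\end{smallmatrix}\right)\in{\rm urSp}(2g)$, comparing $P\left(\begin{smallmatrix} I_g & S \\ 0 & I_g\end{smallmatrix}\right)$ with $\left(\begin{smallmatrix} I_g & S' \\ 0 & I_g\end{smallmatrix}\right)P$ by block multiplication forces $AS=S'\,{}^tA^{-1}$, that is,
\[
P\begin{pmatrix} I_g & S \\ 0 & I_g\end{pmatrix}P^{-1}=\begin{pmatrix} I_g & A\,S\,{}^tA \\ 0 & I_g\end{pmatrix}.
\]
Thus conjugation acts on ${\rm Sym}_g(\mathbb{Z})$ by the congruence $S\mapsto A\,S\,{}^tA$, where $A$ ranges over all of ${\rm GL}(g,\mathbb{Z})$ as $P$ ranges over ${\rm urSp}(2g)$ (the off-diagonal block $B$ drops out, consistent with $\mathcal{S}_g$ being abelian). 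Writing $S_{1,1}=e_1\,{}^te_1$ with $e_1$ the first standard basis vector, the conjugate of $Y_{1,1}$ by $P$ corresponds to $A e_1\,{}^te_1\,{}^tA=(Ae_1)\,{}^t(Ae_1)=v\,{}^tv$, where $v=Ae_1$ is the first column of $A$. As $A$ runs over ${\rm GL}(g,\mathbb{Z})$, this first column runs over exactly the primitive vectors of $\mathbb{Z}^g$. Hence the subgroup $M$ generated by the conjugates of $Y_{1,1}$ corresponds to the subgroup of ${\rm Sym}_g(\mathbb{Z})$ generated by $\{\,v\,{}^tv : v\in\mathbb{Z}^g\text{ primitive}\,\}$.

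It then remains to show $M={\rm Sym}_g(\mathbb{Z})$, which I would do by producing the basis. Each $e_i$ is primitive, so $S_{i,i}=e_i\,{}^te_i\in M$. For $i\neq j$ the vector $e_i+e_j$ is primitive and
\[
(e_i+e_j)\,{}^t(e_i+e_j)=S_{i,i}+S_{j,j}+S_{i,j},
\]
so $S_{i,j}=(e_i+e_j)\,{}^t(e_i+e_j)-S_{i,i}-S_{j,j}\in M$ as well (for $g=1$ this second part is vacuous). Since the $S_{i,j}$ with $i\leq j$ generate ${\rm Sym}_g(\mathbb{Z})$, we conclude $M={\rm Sym}_g(\mathbb{Z})$, i.e. the conjugates of $Y_{1,1}$ generate $\mathcal{S}_g$, which proves the lemma. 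There is no serious obstacle; the only steps needing care are the block conjugation computation and the elementary fact that the rank-one symmetric matrices $v\,{}^tv$ coming from primitive $v$ already span all symmetric matrices, via the $e_i+e_j$ trick.
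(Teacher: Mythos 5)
Your proof is correct, and it is essentially the paper's own argument written additively: the paper uses the same conjugation formula $\left(\begin{smallmatrix} A & 0 \\ 0 & {}^t\!A^{-1}\end{smallmatrix}\right)\left(\begin{smallmatrix} I_g & B \\ 0 & I_g\end{smallmatrix}\right)\left(\begin{smallmatrix} A & 0 \\ 0 & {}^t\!A^{-1}\end{smallmatrix}\right)^{-1}=\left(\begin{smallmatrix} I_g & AB\,{}^t\!A \\ 0 & I_g\end{smallmatrix}\right)$, conjugating $Y_{1,1}$ by transposition matrices $\widetilde{A}_{i,j}$ (whose relevant column is $e_i$) to obtain each $Y_{i,i}$, and by $X_{2,1}$ (whose relevant column is $e_1+e_2$), so that its key relation $Y_{1,1}^{-1}\cdot X_{2,1}Y_{1,1}X_{2,1}^{-1}\cdot Y_{2,2}^{-1}=Y_{1,2}$ is precisely your identity $(e_1+e_2)\,{}^t(e_1+e_2)-S_{1,1}-S_{2,2}=S_{1,2}$ written multiplicatively. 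Your primitive-vector formulation merely merges the paper's two reduction steps (permutation conjugations taking $Y_{i,j}$ to $Y_{1,1}$ or $Y_{1,2}$, then the relation above) into one uniform statement, so there is no substantive difference between the two proofs.
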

Since $\mathcal{S}_g$ is abelian and $\mathcal{S}_g[d]$ is generated by $Y_{i,j}^d$ for $1\leq i,j\leq g$, we have the following corollary of Lemma~\ref{normalgen_sg}.
\begin{cor}\label{normalgen_leveld_sg}
For $g\geq 1$ and $d\geq 2$, $\mathcal{S}_g[d]$ is normally generated in ${\rm urSp}(2g)$ by $Y_{1,1}^d$.
\end{cor}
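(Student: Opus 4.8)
The plan is to deduce Corollary~\ref{normalgen_leveld_sg} from Lemma~\ref{normalgen_sg} together with the two facts recorded just before the statement: that $\mathcal{S}_g$ is abelian and that $\mathcal{S}_g[d]$ is generated by the $Y_{i,j}^d$ for $1\le i,j\le g$. Writing $N$ for the normal closure of $Y_{1,1}^d$ in ${\rm urSp}(2g)$, I must show $N=\mathcal{S}_g[d]$, and I would split this into the two inclusions.

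For $N\subseteq\mathcal{S}_g[d]$ it suffices to check that $\mathcal{S}_g[d]$ is normal in ${\rm urSp}(2g)$ and contains $Y_{1,1}^d$; the latter is immediate and the former follows from the conjugation formula in $\mathcal{S}_g$, which is a direct computation: conjugating $\left(\begin{smallmatrix} I_g & B \\ 0 & I_g\end{smallmatrix}\right)$ by an element of ${\rm urSp}(2g)$ with upper-left block $A$ replaces $B$ by $AB\,{}^tA$. Since $A(dB)\,{}^tA=d\,(AB\,{}^tA)$, this action preserves the set of symmetric matrices all of whose entries are divisible by $d$, that is, it preserves $\mathcal{S}_g[d]$.

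The reverse inclusion $\mathcal{S}_g[d]\subseteq N$ is the heart of the matter. Because $\mathcal{S}_g[d]$ is generated by the $Y_{i,j}^d$, it is enough to show each $Y_{i,j}^d\in N$. By Lemma~\ref{normalgen_sg} each $Y_{i,j}\in\mathcal{S}_g$ can be written as a product $\prod_k P_k\,Y_{1,1}^{\varepsilon_k}\,P_k^{-1}$ of conjugates of $Y_{1,1}^{\pm1}$, with $P_k\in{\rm urSp}(2g)$ and $\varepsilon_k\in\{\pm1\}$. Here is where commutativity is essential: since $\mathcal{S}_g$ is abelian, raising to the $d$-th power distributes over the product and through each conjugation, so $Y_{i,j}^d=\prod_k P_k\,(Y_{1,1}^{d})^{\varepsilon_k}\,P_k^{-1}$, a product of conjugates of $Y_{1,1}^{\pm d}$, hence an element of $N$. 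Combining the two inclusions gives $N=\mathcal{S}_g[d]$.

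The only genuine obstacle is this commutativity step: in a non-abelian group $(xy)^d\ne x^dy^d$ and $(PxP^{-1})^d$ would not collapse to $P x^d P^{-1}$, so the passage from a factorization of $Y_{i,j}$ to one of $Y_{i,j}^d$ would break down. Everything hinges on $\mathcal{S}_g$ being abelian, which is exactly the observation noted before the statement; once that is in hand the corollary is a formal consequence of Lemma~\ref{normalgen_sg}. Equivalently, one may phrase the whole argument at once by noting that multiplication by $d$ is a ${\rm urSp}(2g)$-equivariant endomorphism of the abelian group $\mathcal{S}_g$ whose image is $\mathcal{S}_g[d]$; it therefore carries the normal closure of $Y_{1,1}$, which is all of $\mathcal{S}_g$ by Lemma~\ref{normalgen_sg}, onto the normal closure $N$ of $Y_{1,1}^d$, giving $N=\mathcal{S}_g[d]$ directly.
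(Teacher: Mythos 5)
Your proof is correct and takes essentially the same approach as the paper: the paper deduces this corollary from Lemma~\ref{normalgen_sg} by invoking exactly the two facts you use---that $\mathcal{S}_g$ is abelian (so $d$-th powers distribute over products of conjugates, all of which lie in the normal subgroup $\mathcal{S}_g$) and that $\mathcal{S}_g[d]$ is generated by the $Y_{i,j}^d$---leaving the details you spell out implicit. Your closing reformulation via the ${\rm urSp}(2g)$-equivariant endomorphism $x\mapsto x^d$ of $\mathcal{S}_g$ is just a compact repackaging of that same argument.
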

\begin{proof}[Proof of Lemma~\ref{normalgen_sg}]
Since $\mathcal{S}_g$ is generated by $Y_{i,j}$ for $1\leq i,j\leq g$, it is enough for the proof of Lemma~\ref{normalgen_sg} to show that each $Y_{i,j}$ is a product of conjugations of $Y_{1,1}$ in ${\rm urSp}(2g)$. Note that
\begin{eqnarray*}
\left(
    \begin{array}{cc}
      A & 0  \\
      0 & {}^t\!A^{-1} 
    \end{array}
  \right)
\left(
\begin{array}{cc}
      I_g & B  \\
      0 & I_g 
\end{array}
\right) 
\left(
    \begin{array}{cc}
      A & 0  \\
      0 & {}^t\!A^{-1} 
    \end{array}
  \right)^{-1}
&=&
\left(
    \begin{array}{cc}
      I_g & AB{}^t\!A  \\
      0 & I_g 
    \end{array}
  \right).
\end{eqnarray*}
We define $A_{i,j}:=I_g+S_{i,j}-S_{i,i}-S_{j,j}\in GL(g,\mathbb Z)$ and 
\begin{eqnarray*}
\widetilde{A}_{i,j}:=
\left(
    \begin{array}{cc}
      A_{i,j} & 0 \\
      0 & A_{i,j} 
    \end{array}
  \right) \in {\rm urSp}(2g)
\end{eqnarray*}
for distinct $1\leq i,j\leq g$. We remark that $A_{i,j}={}^t\!A_{i,j}=A_{i,j}^{-1}$ and $\widetilde{A}_{i,j}=\widetilde{A}_{i,j}^{-1}$. Since $A_{1,i}S_{i,j}A_{1,i}=S_{1,j}$ and $A_{2,j}S_{1,j}A_{2,j}=S_{1,2}$ for distinct $1\leq i,j\leq g$, we have $\widetilde{A}_{1,i}Y_{i,j}\widetilde{A}_{1,i}=Y_{1,j}$ and $\widetilde{A}_{2,j}Y_{1,j}\widetilde{A}_{2,j}=Y_{1,2}$. Hence each $Y_{i,j}$ is conjugate to $Y_{1,2}$ in ${\rm urSp}(2g)$ for distinct $1\leq i,j\leq g$.

Since $A_{1,i}S_{i,i}A_{1,i}=S_{1,1}$, we have $\widetilde{A}_{1,i}Y_{i,i}\widetilde{A}_{1,i}=Y_{1,1}$ for $1\leq i\leq g$. Thus it is enough for the proof of Lemma~\ref{normalgen_sg} to show that each $Y_{1,2}$ is a product of conjugations of $Y_{1,1}$ in ${\rm urSp}(2g)$. We can check $Y_{1,1}^{-1}\cdot X_{2,1}Y_{1,1}X_{2,1}^{-1}\cdot Y_{2,2}^{-1}=Y_{1,2}$ and we get Lemma~~\ref{normalgen_sg}.
\end{proof}

\begin{proof}[Proof of Proposition~\ref{ursp}]
For each $X=\left(
    \begin{array}{cc}
      A & B  \\
      0 & {}^t\!A^{-1} 
    \end{array}
  \right)\in {\rm urSp}(2g)[d]$, $A$ is unimodular and $A\equiv I_g$ modulo $d$. The condition means $A\in \Gamma _d(g)$. For $g\geq 3$ (resp. $g=2$), by Lemma~\ref{normalgen_leveld_gl} (resp. Lemma~\ref{normalgen_level2_gl}), there exists a product $X^\prime $ of conjugations of $E_{1,2}^d$ (resp. $F_1$) in $\Gamma _d(g)$ such that $A=X^\prime $. Then $\widetilde{X^\prime }:=\left(
    \begin{array}{cc}
      X^\prime  & 0  \\
      0 & {}^t\!(X^\prime )^{-1} 
    \end{array}
  \right)\in {\rm urSp}(2g)[d]$ is a product of conjugations of $X_{1,2}^d$ (resp. $Z_1$) in ${\rm urSp}(2g)$ for $d\geq 3$ (resp. $d=2$). Since $A(X^\prime )^{-1}=I_g$, $X\widetilde{X^\prime }^{-1}$ is an element of $\mathcal{S}_g[d]$. By Corollary~\ref{normalgen_leveld_sg}, there exist a product $Y$ of conjugations of $Y_{1,1}^d$ in ${\rm urSp}(2g)$ such that $X\widetilde{X^\prime }^{-1}=Y$. We have $X=Y\widetilde{X^\prime }$ and we have completed the proof of Proposition~\ref{ursp}. 
\end{proof}

\section{Proof of Corollaries}

In this section, we prove Corollary~\ref{thm_cor} and Corollary~\ref{thm_cor2}. 

\subsection{Proof of Corollary~\ref{thm_cor}}~\label{proof_cor}
For $d\geq 2$, we define 
\begin{eqnarray*}
 {\rm urSp}(2g,\mathbb Z_d) &:=&\left\{ \left(
    \begin{array}{cc}
      A & B  \\
      0 & {}^t\!A^{-1} 
    \end{array}
  \right) \in {\rm Sp}(2g,\mathbb Z_d)\middle|
\begin{array}{l}
\text{$A$ is unimodular,} \\
\text{$A^{-1}B$ is symmetric}
\end{array}\right\},\\
\mathcal{S}_g(d) &:=& \left\{ \left(
    \begin{array}{cc}
      I_g & B  \\
      0 & I_g 
    \end{array}
  \right) \in {\rm Sp}(2g,\mathbb Z_d)\middle|
\begin{array}{l}
\text{$B$ is symmetric}
\end{array}\right\}. 
\end{eqnarray*}
For convenience, we define $\mathcal{H}_{g,1}[1]:=\mathcal{IH}_{g,1}$, $\Gamma _1(H_g\text{ rel }D_0):=\mathcal{I}(H_g\text{ rel }D_0)$, ${\rm urSp}(2g,\mathbb Z_1):={\rm urSp}(2g)$, $\mathcal{S}_g(1):=\mathcal{S}_g$ and $\Psi _1:=\Psi $. By an argument similar to that in Section~2 of \cite{Birman1}, Lemma~\ref{birman} is generalized into the following lemma.
\begin{lem}
For $d\geq 1$, $\Psi _d(\mathcal{H}_{g,1})={\rm urSp}(2g,\mathbb Z_d)$.
\end{lem}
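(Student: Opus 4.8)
The plan is to reduce the statement to a purely algebraic surjectivity fact, using the factorization of $\Psi_d$ through $\Psi$. First observe that $\Psi_d=\Phi_d\circ\Psi$, since both the $\mathbb{Z}$- and $\mathbb{Z}_d$-actions on homology are induced by the action of $\mathcal{H}_{g,1}$ and the natural projection $\mathbb{Z}\to\mathbb{Z}_d$ intertwines them. Combining this with Lemma~\ref{birman}, i.e. $\Psi(\mathcal{H}_{g,1})={\rm urSp}(2g)$, gives
\[
\Psi_d(\mathcal{H}_{g,1})=\Phi_d\bigl(\Psi(\mathcal{H}_{g,1})\bigr)=\Phi_d\bigl({\rm urSp}(2g)\bigr).
\]
Thus it suffices to prove the matrix equality $\Phi_d({\rm urSp}(2g))={\rm urSp}(2g,\mathbb{Z}_d)$. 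The case $d=1$ is exactly Lemma~\ref{birman} by the stated conventions ${\rm urSp}(2g,\mathbb{Z}_1):={\rm urSp}(2g)$ and $\Psi_1:=\Psi$, so I would assume $d\geq 2$ from now on. The inclusion $\Phi_d({\rm urSp}(2g))\subseteq{\rm urSp}(2g,\mathbb{Z}_d)$ is immediate: reduction modulo $d$ preserves the block-triangular form with vanishing lower-left block, sends ${}^t\!A^{-1}$ to ${}^t\!\bar A^{-1}$, carries a \emph{unimodular} $A$ (with $\det A=\pm1$) to a matrix $\bar A$ with $\det\bar A\equiv\pm1\pmod d$, and preserves symmetry of $A^{-1}B$.

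For the reverse inclusion I would lift an arbitrary element $\left(\begin{smallmatrix}\bar A & \bar B\\ 0 & {}^t\!\bar A^{-1}\end{smallmatrix}\right)$ of ${\rm urSp}(2g,\mathbb{Z}_d)$ in two steps. First lift the block $\bar A$: since $\det\bar A\equiv\pm1\pmod d$, after possibly right-multiplying by the fixed integral matrix ${\rm diag}(-1,1,\dots,1)$ to normalize the determinant to $\equiv 1$, the surjectivity of the reduction homomorphism ${\rm SL}(g,\mathbb{Z})\to{\rm SL}(g,\mathbb{Z}_d)$ (valid for all $g\geq 1$, as elementary matrices generate the target and lift to the source) produces $A\in{\rm GL}(g,\mathbb{Z})$ with $\det A=\pm1$ reducing to $\bar A$. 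Second, lift the off-diagonal data: because $\bar A^{-1}\bar B$ is symmetric over $\mathbb{Z}_d$, choose any symmetric integral matrix $S$ reducing to $\bar A^{-1}\bar B$ (lift the entries on and above the diagonal and impose symmetry), and set $B:=AS$. Then $A^{-1}B=S$ is symmetric, so $\left(\begin{smallmatrix}A & B\\ 0 & {}^t\!A^{-1}\end{smallmatrix}\right)\in{\rm urSp}(2g)$, and it reduces to the prescribed element because $\bar A\,\overline S=\bar B$. This establishes surjectivity and hence the lemma.

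The only genuinely nontrivial input is the surjectivity of ${\rm SL}(g,\mathbb{Z})\to{\rm SL}(g,\mathbb{Z}_d)$ used to lift the $A$-block, together with the bookkeeping that keeps the lifted determinant equal to $\pm1$ rather than an arbitrary unit. Once $A$ is lifted the symmetric block lifts essentially for free via $B=AS$, so I expect the determinant bookkeeping to be the only subtle point: one must use that the condition \emph{unimodular} over $\mathbb{Z}_d$ means $\det\equiv\pm1\pmod d$, which is exactly the image of $\det=\pm1$ under reduction. This is precisely what makes the statement correct and distinguishes ${\rm urSp}(2g,\mathbb{Z}_d)$ from the strictly larger set of block matrices with $\det A$ an arbitrary unit in $\mathbb{Z}_d$.
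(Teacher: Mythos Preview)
Your proof is correct and is essentially a careful unpacking of what the paper only sketches: the paper gives no argument beyond ``by an argument similar to that in Section~2 of \cite{Birman1}'', and your factorization $\Psi_d=\Phi_d\circ\Psi$ together with Lemma~\ref{birman} and the explicit lifting argument is precisely the natural way to make that appeal rigorous.

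One remark on the point you flag at the end. The paper's phrase ``$A$ is unimodular'' in the definition of ${\rm urSp}(2g,\mathbb Z_d)$ is copied verbatim from the integral case, and the lemma is only true under your reading $\det A\equiv\pm1\pmod d$; if it meant $\det A\in(\mathbb Z_d)^\times$ the statement would fail for $d$ with extra units (e.g.\ $d=5$), since $\Psi_d(\mathcal H_{g,1})=\Phi_d({\rm urSp}(2g))$ forces $\det A\equiv\pm1$. So your interpretation is the intended one, and your observation that this is the genuine content of the lemma is exactly right. The surjectivity ${\rm SL}(g,\mathbb Z)\twoheadrightarrow{\rm SL}(g,\mathbb Z_d)$ you invoke is standard and valid for all $g\geq1$, so the lifting of the $A$-block goes through as written.
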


\begin{proof}[Proof of Corollary~\ref{thm_cor}]
Assume $d\geq 1$ and $g\geq 3$. For $f\in \Gamma _d(H_g\text{ rel }D_0)$), by the definition, $\Psi _d(f)$ is an element of $\mathcal{S}_g(d)$. Since $t_{D_1}$ is an element of $\Gamma _d(H_g\text{ rel }D_0)$ and $\mathcal{S}_g(d)$ is normally generated in ${\rm urSp}(2g,\mathbb Z_d)$ by $\Phi _d(Y_{1,1})$ by an argument similar to that in the proof of Lemma~\ref{normalgen_sg}, we have $\Psi _d(\Gamma _d(H_g\text{ rel }D_0))=\mathcal{S}_g(d)$. Hence we have the exact sequence
\begin{eqnarray*}
1\longrightarrow \mathcal{H}_{g,1}[d]\longrightarrow \Gamma _d(H_g\text{ rel }D_0)\stackrel{\Psi _d|_{\Gamma _d(H_g\text{ rel }D_0)}}{\longrightarrow }\mathcal{S}_g(d)\longrightarrow 1.
\end{eqnarray*}
By the exact sequence, $\Gamma _d(H_g\text{ rel }D_0)$ is generated by $\mathcal{H}_{g,1}[d]$ and conjugations of $t_{D_1}$ in $\mathcal{H}_{g,1}$. Therefore, by Theorem~\ref{mainthm} and Theorem~\ref{thm_application}, $\mathcal{I}(H_g\text{ rel }D_0)$ is normally generated in $\mathcal{H}_{g,1}$ by $t_{D_1}$ and $t_{C_1}t_{C_2}^{-1}$, $\Gamma _2(H_g\text{ rel }D_0)$ is normally generated in $\mathcal{H}_{g,1}$ by $\omega $, $t_{D_1}$ and $t_{C_1}t_{C_2}^{-1}$ and $\Gamma _d(H_g\text{ rel }D_0)$ is normally generated in $\mathcal{H}_{g,1}$ by $\alpha ^d$, $t_{D_1}$ and $t_{C_1}t_{C_2}^{-1}$ for $d\geq 3$. We have completed the proof of Corollary~\ref{thm_cor}.
\end{proof}

\subsection{Proof of Corollary~\ref{thm_cor2}}~\label{proof_cor2}

Let $D_2^{\prime \prime}$ be a simple closed curve on $\Sigma _{g,1}$ as in Figure~\ref{d2doubleprime}. Note that $D_2^{\prime \prime }$ bounds a disk in $H_g$.

\begin{proof}[Proof of Corollary~\ref{thm_cor2}]
By Corollary~\ref{thm_cor}, $\Gamma _2(H_g\text{ rel }D_0)$ is normally generated in $\mathcal{H}_{g,1}$ by $\omega $, $t_{D_1}$ and $t_{C_1}t_{C_2}^{-1}$ for $g\geq 3$. Hence it is sufficient for the proof of Corollary~\ref{thm_cor2} to prove that $t_{C_1}t_{C_2}^{-1}$ is a product of conjugations of $\omega $, $t_{D_1}$ and $\alpha ^2$ in $\mathcal{H}_{g,1}$. Recall that $\alpha =t_{C_1}t_{C_2^\prime }^{-1}$.

Define $f:=t_{D_2}t_{D_2^{\prime \prime }}^{-1}\omega ^{-1}\in \mathcal{H}_{g,1}$. We remark that $t_{D_2}$ and $t_{D_2^{\prime \prime }}$ are conjugate to $t_{D_1}$ in $\mathcal{H}_{g,1}$ since $D_2$ and $D_2^{\prime \prime }$ bound non-separating proper disks in $H_g$. We can check that $f(C_1)=C_2^\prime $ and $f(C_2^\prime )=C_2$. Then we have
\begin{eqnarray*}
t_{C_1}t_{C_2}^{-1}&=&t_{C_1}t_{C_2^\prime }^{-1}\cdot t_{C_2^\prime }t_{C_2}^{-1}\\
&=&\alpha \cdot f(t_{C_1}t_{C_2^\prime }^{-1})f^{-1}\\
&=&\alpha ^2\cdot \alpha ^{-1}f\alpha \cdot f^{-1}.
\end{eqnarray*}
We have completed the proof of Corollary~\ref{thm_cor2}.

\end{proof}

\begin{figure}[h]
\includegraphics[scale=0.7]{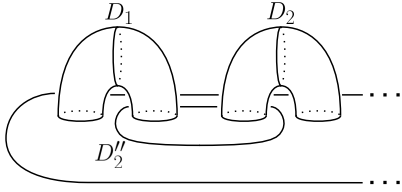}
\caption{Simple closed curve $D_2^{\prime \prime}$ on $\Sigma _{g,1}$.}\label{d2doubleprime}
\end{figure}

\par
{\bf Acknowledgements: } The author would like to express his gratitude to Hisaaki Endo, for his encouragement and helpful advices. The author also wishes to thank Susumu Hirose and Wolfgang Pitsch for their comments and helpful advices. The author was supported by JSPS KAKENHI Grant number 15J10066.


\begin{thebibliography}{99}

\bibitem{Bass-Milnor-Serre}
H. Bass and J. Milnor and J.-P. Serre, \emph{Solution of the congruence subgroup problem for ${\rm SL}_n$ $(n\geq 3)$ and ${\rm Sp}_{2n}$ $(n\geq 2)$}, Inst. Hautes Etudes Sci. Publ. Math. No. \textbf{33} (1967), 59--137.

\bibitem{Birman1}
J. S. Birman, \emph{On the equivalence of Heegaard splittings of closed, orientable 3-manifolds. Knots, groups, and 3-manifolds} (Papers dedicated to the memory of R. H. Fox), Ann. of Math. Studies, No. \textbf{84}, Princeton Univ. Press, Princeton, N.J., 1975, pp. 137--164.

\bibitem{Birman2}
J. S. Birman, \emph{On Siegel's modular group}, Math. Ann. \textbf{191} (1971), 59--68. 

\bibitem{Day-Putman}
M. Day and A. Putman, \emph{The complex of partial bases for $F_n$ and finite generation of the Torelli subgroup of ${\rm Aut}(F_n)$}, Geom. Dedicata \textbf{164} (2013), 139--153. 

\bibitem{Griffiths}
H. B. Griffiths, \emph{Automorphisms of a 3-dimensional handlebody}, Abh. Math. Sem. Univ. Hamburg \textbf{26} (1964), 191--210.

\bibitem{Hirose}
S. Hirose, \emph{The action of the handlebody group on the first homology group of the surface}, Kyungpook Math. J. \textbf{46} (2006), no. 3, 399--408.

\bibitem{Johnson1}
D. L. Johnson, \emph{Homeomorphisms of a surface which act trivially on homology}, Proc. Amer. Math. Soc. \textbf{75} (1979), no. 1, 119--125.

\bibitem{Johnson2}
D. L. Johnson, \emph{The structure of the Torelli group. I. A finite set of generators for $\mathcal{I}$}, Ann. of Math. (2) \textbf{118} (1983), no. 3, 423--442. 

\bibitem{Luft}
E. Luft, \emph{Actions of the homeotopy group of an orientable 3-dimensional handlebody}, Math. Ann. \textbf{234} (1978), no. 3, 279--292. 

\bibitem{Magnus}
W. Magnus, \emph{\"Uber $n$-dimensionale Gittertransformationen}, (German) Acta Math. \textbf{64} (1935), no. 1, 353--367.

\bibitem{Mccarthy-Pinkall}
J. D. McCarthy and U. Pinkall. \emph{Representing homology automorphisms of nonorientable surfaces}, Max Planck Inst. Preprint MPI/SFB 85-11, revised version written on 26 Feb 2004, available from http://www.math.msu.edu/\~{}mccarthy/publications/selected.papers.html.

\bibitem{Morita}
S. Morita, \emph{Casson's invariant for homology 3-spheres and characteristic classes of surface bundles. I}, Topology \textbf{28} (1989), no. 3, 305--323. 

\bibitem{Pitsch}
W. Pitsch, \emph{Trivial cocycles and invariants of homology 3-spheres}, Adv. Math. \textbf{220} (2009), no. 1, 278--302.

\bibitem{Powell}
J. Powell, \emph{Two theorems on the mapping class group of a surface}, Proc. Amer. Math. Soc. \textbf{68} (1978), no. 3, 347--350.

\bibitem{Putman1}
A. Putman, \emph{Small generating sets for the Torelli group}, Geom. Topol. \textbf{16} (2012), no. 1, 111--125. 

\bibitem{Putman3}
A. Putman, \emph{The congruence subgroup problem for ${\rm SL}_n(\mathbb Z)$}, Note available from http://www.math.rice.edu/~andyp/notes/.
\end{thebibliography}
\end{document}